    \newcommand{\Rmnum}[1]
    {\expandafter\@slowromancap\romannumeral #1@}
\newtheorem{thm}{Theorem}[section]
\newtheorem{lemma}[thm]{Lemma}
\newcounter{foo}[subsection]
\newtheorem{Remark}[foo]{Remark}
\newcounter{fooo}[section]
\newtheorem{stepp}[fooo]{Step}
\newtheorem{con}[thm]{Conjecture}
\newtheorem{example}[thm]{Example}
\newtheorem{defin}[thm]{Definition}
\theoremstyle{definition}
\newtheorem{remark}[thm]{Remark}
\begin{document}

\renewcommand{\baselinestretch}{1.3}
\title{On Fan's conjecture about $4$-flow}

\author{Deping Song\textsuperscript{a}\quad Shuang Li\textsuperscript{a}\quad Xiao Wang\textsuperscript{b}
\\
\\{\footnotesize  \textsuperscript{a} \em Laboratory  of Mathematics and Complex System $($Ministry of Education$)$,}\\{\footnotesize \em School of Mathematical Sciences, Beijing Normal University, Beijing, 100875, China } 
\\{\footnotesize  \textsuperscript{b} \em College of Mathematics and Computer Application, Shangluo University,
	Shangluo, 726000,  China } }

\date{}
\maketitle
\footnote{\scriptsize\qquad{\em E-mail address:} ~~202021130067@mail.bnu.edu.cn ~~(D. Song),  202121130071@mail.bnu.edu.cn ~~(S. Li),\\
~~wxiao@mail.nwpu.edu.cn ~~(X. Wang).}

\begin{abstract}

Let $G$ be a bridgeless graph,~$C$ is a circuit of $G$.~Fan proposed a conjecture that if $G/C$ admits a nowhere-zero 4-flow,~then $G$ admits a 4-flow $(D,f)$ such that $E(G)-E(C)\subseteq$ supp$(f)$ and $|\textrm{supp}(f)\cap E(C)|>\frac{3}{4}|E(C)|$.~The purpose of this conjecture is to find shorter circuit cover in bridgeless graphs. Fan showed that the conjecture holds for $|E(C)|\le19.$ Wang,~Lu and Zhang showed that the conjecture holds for $|E(C)|\le 27$.~In this paper,~we prove that the conjecture holds for $|E(C)|\le 35.$

\noindent {\em Key words:} Integer 4-flow,~$\mathbb{Z}_2\times \mathbb{Z}_2$ flow,~circuit 
\end{abstract}

\section{Introduction}

Graphs considered in this paper may have loops and multiple edges.~For terminology and notations not defined here,~we follow~\cite{Bondy,Zhang_1}.
A \it circuit \rm is a  $2$-regular connected graph. A \it cycle \rm is a   graph such that the degree of each vertex is even.~\it Contracting \rm an edge means deleting the edge and then identifying its ends. For a subgraph $H$~of a graph $G$,~let $G/H$ be the graph obtained from $G$ by contracting all edges of $H$. A \it weight \rm of a graph $G$ is a function $\omega : E(G)\to \mathbb{Z}^+$,~where $\mathbb{Z}^+$ is the set of positive integers.
For a sugraph $H$ of $G$,~denote
$$ \omega (H)=\omega(E(H))=\sum\limits_{e\in E(H)}\omega(e).$$

Let $G$ be a graph,~$(D,f)$~be an ordered pair where $D$ is an orientation of $E(G)$~and $f:E(G)\to \Gamma$~is a function   where $\Gamma$ is an abelian group (an additive group with ``$0$" as its identity).~An oriented edge of $G$ (under the orientation $D$)~is called an \it arc. \rm The graph $G$ under the orientation $D$ is denoted by $D(G)$.~\rm For a vertex $v\in V(G)$,~let $E^+(v)$~(or $E^-(v)$)~be the set of all arcs of $D(G)$ with their tails~(or,~heads, respectively)~at the vertex $v$ and let $$f^+(v)=\sum\limits_{e\in E^+(v)}f(e)$$
and
$$f^-(v)=\sum\limits_{e\in E^-(v)}f(e).$$
 
A \it flow \rm of a graph $G$ is an ordered pair $(D,f)$ such that $$f^+(v)=f^-(v)$$ for every vertex $v\in V(G).$ An \it integer flow \rm $(D,f)$ is a flow of $G$ with an integer-valued function $f$.~An \it integer $k$-flow \rm of $G$ is an integer flow $(D,f)$ such that $|f(e)|<k$ for every edge $e\in G$.~The \it support \rm of $f$ is the set of all edges of $G$ with $f(e)\ne 0$ and is denoted by $supp(f)$. A flow $(D,f)$ of a graph $G$ is \it nowhere-zero \rm if $supp(f)=E(G)$.

For $\mathbb{Z}_2\times \mathbb{Z}_2$-flow~$(D,f)$,~it is not difficult to check that for each direction~$D'$,~$(D',f)$~is a~$\mathbb{Z}_2\times \mathbb{Z}_2$-flow.~For convenience,~$(D,f)$~is denoted by $f$.~If $C$ is a cycle of a graph $G$ and $a\in \mathbb{Z}_2\times \mathbb{Z}_2-\left\lbrace (0,0)\right\rbrace $,~Define $f_C$,~$f_{C,a}$~as follows:

\begin{equation}
	f_C(e)=\left\lbrace 
	\begin{array}{lllllllll}
		a&&&\rm if~\it e\in E(C);\\
		(0,0)&&&\rm if~\it e\in E(G)\backslash E(C) .
	\end{array}
	\nonumber
	\right.
\end{equation}

Then $f_{C,a}$ is a $\mathbb{Z}_2\times \mathbb{Z}_2$-flow of $G$,~$f_{C,(1,1)}$ is denoted by $f_{C}$.

In 2017,~in order to find shorter circuit cover of graphs,~Fan $($\cite{Fan}$)$~proposed the following conjecture.

\begin{con}\rm(\cite{Fan})\label{1.1}
	\it Let $G$ be a graph with a circuit $C$.~If $G/C$ admits a nowhere-zero $4$-flow,~then $G$ admits a $4$-flow $(D,f)$ such that $E(G)- E(C)\subseteq supp(f)$ and $ | E_{g=0}(C)|<\frac{1}{4}|E(C)|.$
\end{con}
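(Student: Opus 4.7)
The plan is to work with $\mathbb{Z}_2\times\mathbb{Z}_2$-flows, invoking Tutte's equivalence between integer $4$-flows and nowhere-zero $\mathbb{Z}_2\times\mathbb{Z}_2$-flows. Starting from a nowhere-zero $\mathbb{Z}_2\times\mathbb{Z}_2$-flow $g$ on $G/C$, I would first lift $g$ to some $\mathbb{Z}_2\times\mathbb{Z}_2$-flow $f_0$ on $G$ with $f_0|_{E(G)-E(C)}=g$; the existence of such a lift is standard, since contracting the circuit $C$ quotients the cycle space by the one-dimensional subspace spanned by $C$. The extensions of $g$ to $G$ then form the coset $\{f_0 + f_{C,a} : a\in\mathbb{Z}_2\times\mathbb{Z}_2\}$, and each of its four members is a $\mathbb{Z}_2\times\mathbb{Z}_2$-flow nowhere-zero off $C$. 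Setting $Z_a=\{e\in E(C):f_0(e)=a\}$, the flow $f_0+f_{C,a}$ vanishes on $E(C)$ precisely on $Z_a$, and the four sets $Z_a$ partition $E(C)$.

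Hence $\min_a |Z_a|\le |E(C)|/4$ by pigeonhole, giving the weak form of the bound. The substantive step is to upgrade this to the \emph{strict} inequality demanded by the conjecture. When $|E(C)|\not\equiv 0\pmod 4$ the strict inequality is automatic, so I may assume $4\mid |E(C)|$ and that $|Z_a|=|E(C)|/4$ for every $a$. In that rigid uniform equality case, I would seek an auxiliary cycle $C'\subseteq G$ with $E(C')\cap E(C)\neq\emptyset$ such that the modified flow $(f_0+f_{C,a})+f_{C',b}$, for some choice of $a$ and $b$, retains the nowhere-zero property on $E(G)-E(C)$ while strictly decreasing the number of zero-edges on $E(C)$. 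Existence of a useful $C'$ should follow from the bridgelessness of $G$ combined with an ear-decomposition of $G$ relative to $C$, and the flip performed on $E(C')\cap E(C)$ provides strictly finer combinatorial control than the four-fold averaging alone.

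If the uniform case resists a purely local repair, I would vary the initial flow $g$ on $G/C$: any other nowhere-zero $\mathbb{Z}_2\times\mathbb{Z}_2$-flow $g'$ generates a new quadruple of extensions, and the symmetric obstruction $|Z_a|=|E(C)|/4$ cannot persist across all choices of $g'$ unless the cycle space of $G$ interacts with $C$ in a very restricted way that should itself be exploitable. A convenient formalization is to introduce a weight $\omega$ on $E(G)$ that is large on $E(C)$ and equal to $1$ elsewhere, and then apply shortest-cycle-cover type machinery from Fan's original framework to minimize the total $\omega$-weight of the zero-support of $f$; this automatically concentrates the optimization on reducing zeros inside $E(C)$.

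The main obstacle, and the reason the conjecture has so far only been confirmed in the thresholds $|E(C)|\le 19$, $\le 27$, and now $\le 35$, is the control of the uniform equality case \emph{uniformly} in $|E(C)|$. Existing partial results proceed by delicate case analysis on the residue of $|E(C)|$ modulo $4$ and on small substructures inside $C$, and the step that requires a genuinely new idea is producing, for arbitrarily long circuits $C$, an auxiliary cycle modification or choice of base flow that strictly beats the $|E(C)|/4$ averaging bound without ever destroying the nowhere-zero condition on $E(G)-E(C)$.
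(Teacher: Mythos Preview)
The statement is Conjecture~\ref{1.1}, which the paper does \emph{not} prove in general; it remains open, and the paper establishes only the partial case $|E(C)|\le 35$ (Theorem~\ref{1.3}). You acknowledge this yourself. Your opening reductions---Tutte's equivalence with $\mathbb{Z}_2\times\mathbb{Z}_2$-flows, lifting a nowhere-zero flow on $G/C$ to a flow on $G$ supported off $C$, noting that the four translates $f_0+f_{C,a}$ have zero-sets partitioning $E(C)$, and disposing of $|E(C)|\not\equiv 0\pmod 4$ by pigeonhole---match the paper's setup for its partial result (see the paragraph after (S1)--(S3) and Lemma~\ref{l1}).

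The genuine gap is exactly where you place it. In the rigid case $|Z_a|=|E(C)|/4$ for all $a$, your plan to ``seek an auxiliary cycle $C'$'' or to ``vary the initial flow $g$'' is a hope, not an argument: you give no mechanism guaranteeing that such a modification exists while preserving the nowhere-zero condition off $C$ and strictly reducing the zero-count on $C$. The paper does implement a concrete version of this idea---modifying the flow along paths $P_{a,b}\subseteq G-E(C)$ between odd-degree vertices on $C$ (Lemmas~\ref{l3}--\ref{l6}) and analyzing maximal $E_{g=(0,0)}$-alternating segments of $C$---but the argument is a finite case analysis that relies essentially on $\omega(C)\le 35$ to bound segment lengths and edge weights. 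Neither your ear-decomposition suggestion nor the weighted-optimization reformulation supplies the missing uniform control, so as written the proposal does not advance beyond the known partial results.
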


For this conjecture,~Fan(\cite{Fan}) proved that Conjecture is true for~$|E(C)|\le 19$.~As an application of this result,~Fan proved that if $G$ is a bridgeless graph with minimum degree at least three,~then $cc(G)< \frac{278}{171}|E(G)|\approx 1.62573|E(G)|$,~\\which improved the result $cc(G)< \frac{44}{27}|E(G)|\approx1.62963|E(G)|$ by Kaiser et al~\cite{K}.~This conjecture can be refined as follows:

\begin{con}\label{Conjecture 1.2}
	\it Let $G$ be a graph with a circuit $C$ and $\omega:E(G)\to \mathbb{Z}$.~If there is a $\mathbb{Z}_2\times \mathbb{Z}_2$-flow  $f$~such that $E(G)-E(C)\subseteq supp(f)$,~then there is a $\mathbb{Z}_2\times \mathbb{Z}_2$-flow  $g$~in $G$ such that $E(G)-E(C)\subseteq supp(g)$ and $\omega(E_{g=(0,0)}(C))< \frac{1}{4}\omega(C)$.	
\end{con}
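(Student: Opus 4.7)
The plan is to combine an averaging argument over the natural modifications of $f$ on $C$ with a tight-case analysis that exploits cycles of $G$ meeting $C$ non-trivially.

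First, consider the four $\mathbb{Z}_2\times\mathbb{Z}_2$-flows $g_0 := f$ and $g_a := f + f_{C,a}$ for $a \in \mathbb{Z}_2\times\mathbb{Z}_2 \setminus \{(0,0)\}$. Since $f_{C,a}$ vanishes off $E(C)$, each $g_a$ still satisfies $E(G) - E(C) \subseteq \mathrm{supp}(g_a)$. For $b \in \mathbb{Z}_2\times\mathbb{Z}_2$, let $S_b = \{e \in E(C) : f(e) = b\}$ and $\omega_b = \omega(S_b)$. Then $E_{g_0=(0,0)}(C) = S_{(0,0)}$ and, for $a \ne (0,0)$, $E_{g_a=(0,0)}(C) = S_a$. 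The four sets $S_b$ partition $E(C)$, so $\sum_b \omega_b = \omega(C)$. Hence the best of the four choices satisfies $\omega(E_{g=(0,0)}(C)) \le \tfrac{1}{4}\omega(C)$, with strict inequality unless $\omega_{(0,0)} = \omega_{(1,0)} = \omega_{(0,1)} = \omega_{(1,1)} = \tfrac{1}{4}\omega(C)$.

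The real work is the tight case in which all four $\omega_b$ are equal. To break the symmetry I would search for a cycle $C'$ of $G$ (even subgraph) that meets $E(C)$ in a proper non-empty subgraph, and replace $f$ by $f' := f + f_{C',a}$ for some $a \ne (0,0)$. Then $f'$ is again a $\mathbb{Z}_2\times\mathbb{Z}_2$-flow; on $E(C)$ the values of $f$ on $E(C) \cap E(C')$ are shifted by $a$, while those outside are preserved. Provided $a$ is chosen so that $f(e) + a \ne (0,0)$ for every $e \in E(C') \setminus E(C)$, the new flow $f'$ still satisfies $E(G) - E(C) \subseteq \mathrm{supp}(f')$. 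Rerunning the first step on $f'$ yields new class weights $\omega'_b$, and the objective is to choose $C'$ and $a$ so that these weights are no longer all equal, whereupon one of the four modifications of $f'$ finishes the proof.

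The main obstacle is showing that such a shift always exists and is always sufficient. The hypothesis that $G/C$ admits a $\mathbb{Z}_2\times\mathbb{Z}_2$-flow gives a rich supply of cycles of $G$ attached to $C$, but controlling them together with the cyclic parity pattern $(f(e))_{e \in E(C)}$ is delicate. I expect the argument to proceed by a detailed combinatorial case analysis organised by the maximal subarcs of $C$ on which $f$ is constant and by how the attached cycles interact with those arcs, combined with an induction either on $|E(C)|$ or on the number of parity-change vertices along $C$. This case analysis is presumably what forces the current upper bound $|E(C)| \le 35$, and pushing it further appears to require sharper structural lemmas about the attached cycles in $G$.
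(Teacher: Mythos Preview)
The statement you are attempting to prove is labelled as a \emph{conjecture} in the paper; the paper does not prove it in full generality, only the special case $\omega(C)\le 35$ (Theorem~\ref{1.3}). So there is no ``paper's own proof'' of this statement to compare against, and your proposal cannot be a correct proof either, since the conjecture is open.

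Your first paragraph is correct and is exactly the reduction the paper uses: the four flows $f+f_{C,a}$ partition $E(C)$ into the four colour classes, so by averaging one of them gives $\omega(E_{g=(0,0)}(C))\le\tfrac14\omega(C)$, with equality only if all four weights coincide; this is why the paper may assume $\omega(C)\equiv 0\pmod 4$ and why Lemma~\ref{l1} holds in a minimal counterexample.

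The genuine gap is precisely where you say ``the main obstacle is showing that such a shift always exists and is always sufficient.'' You never establish this; you only \emph{expect} a case analysis to work, and you correctly guess that this is what caps the paper at $35$. The paper's approach in the bounded case is indeed along the lines you sketch: the cycles $C'$ you propose are exactly the circuits built from paths in the subgraphs $P_{a,b}$ together with arcs of $C$, and the paper organises the analysis not by maximal constant-colour arcs but by maximal $E_{g=(0,0)}$-\emph{alternating} segments, together with the parameters $t,t',t'',\bar t$ recording where those $P_{a,b}$-paths re-enter $C$. That framework yields enough structural lemmas (Lemmas~\ref{l2}--\ref{l45}) to rule out all segment lengths up to $13$, which suffices when $\omega(C)\le 35$. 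Your proposal identifies the right ingredients but does not supply the mechanism that makes the tight case tractable, and no such mechanism is currently known for general $\omega(C)$.
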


Recently,~Wang,~Lu and Zhang(\cite{WLG}) proved that Conjecture \ref{Conjecture 1.2} is true for $\omega(C)\le 27$.~As an application of this result,~they proved that if $G$ is a bridgeless graph with minimum degree at least three,~then $cc(G)<\frac{394}{243}|E(G)|\approx 1.6214|E(G)|$;~if $G$ is a bridgeless and loopless graph   with minimum degree at least three,~then $cc(G)< \frac{334}{207}|E(G)|\approx 1.6135|E(G)|$.

We will prove that Conjecture \ref{Conjecture 1.2} is true for $\omega(C)\le 35$.

\begin{thm}\label{1.3}
	\it Let $G$ be a graph with a circuit $C$ and $\omega:E(G)\to \mathbb{Z}$.~If~$ \omega(C)\le 35$~and there is a $\mathbb{Z}_2\times \mathbb{Z}_2$-flow  $f$~in $G$~such that $E(G)- E(C)\subseteq supp(f)$,~then there is a $\mathbb{Z}_2\times \mathbb{Z}_2$-flow  $g$~in $G$ such that $E(G)- E(C)\subseteq supp(g)$ and $\omega(E_{g=(0,0)}(C))$~\\ $< \frac{1}{4}\omega(C)$.
	
\end{thm}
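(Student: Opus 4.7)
The plan is to argue by contradiction: assume some pair $(G,C)$ violates the theorem, and among all $\mathbb{Z}_2\times\mathbb{Z}_2$-flows $f$ on $G$ with $E(G)-E(C)\subseteq\mathrm{supp}(f)$ pick one that minimizes $\omega(E_{f=(0,0)}(C))$. By the failure of the conclusion this minimum still satisfies $\omega(E_{f=(0,0)}(C))\ge\tfrac{1}{4}\omega(C)$. The minimality then serves as a local-exchange oracle: for every even subgraph $C'$ of $G$ and every nonzero $a\in\mathbb{Z}_2\times\mathbb{Z}_2$, the flow $f+f_{C',a}$ is either invalid (some edge of $E(G)-E(C)$ becomes zero, i.e.\ $C'-E(C)$ meets an edge with $f$-value $a$) or it fails to strictly reduce $\omega(E_{f=(0,0)}(C))$. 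Translating this dichotomy into weighted inequalities on $C\cap C'$ partitioned by $f$-value gives the basic structural tool.

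Next I would exploit the three-cycle decomposition of a $\mathbb{Z}_2\times\mathbb{Z}_2$-flow. For each nonzero $a\in\mathbb{Z}_2\times\mathbb{Z}_2$ the set $H_a=\{e\in E(G):f(e)\notin\{(0,0),a\}\}$ is an even subgraph, namely the support of the $\mathbb{Z}_2$-flow obtained from $f$ by quotienting by $\langle a\rangle$. Thus we have three natural candidate exchange cycles $H_{(0,1)}$, $H_{(1,0)}$, $H_{(1,1)}$, and the zero edges on $C$ are exactly $E(C)\setminus(H_{(0,1)}\cup H_{(1,0)}\cup H_{(1,1)})$. I would partition $E(C)$ into four weight-classes $W_0,W_{(0,1)},W_{(1,0)},W_{(1,1)}$ by $f$-value, record for each $H_a$-circuit using edges of $C$ how it enters and leaves these four classes in cyclic order, and convert the exchange constraints into weighted inequalities among the classes.

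To conclude I would run a discharging-style bound on $C$. Writing $\omega(C)=\sum_j\omega_j$ over the maximal monochromatic arcs of $C$, a zero arc $A$ can be ``improved'' by a swap along any $H_a$-circuit that closes $A$ off, unless $A$ is separated from every such closure by arcs of prescribed colors and weights. Since $\omega(W_0)\ge\tfrac{1}{4}\omega(C)$ and $\omega(C)\le 35$ --- concretely $\omega(W_0)\ge 9$ when $\omega(C)=35$ --- the forbidden separating arcs must consume at least $\omega(C)-\omega(W_0)$ of the weight budget, and one should ultimately reach a configuration admitting an actual improving swap, contradicting minimality.

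The main obstacle, and the reason the step from $27$ to $35$ is not routine, is the combinatorial explosion of the resulting case analysis: with the expanded range the classes $W_0,W_{(0,1)},W_{(1,0)},W_{(1,1)}$ admit many more weight distributions, and the non-zero arcs can be stitched together by the three $H_a$-circuits in patterns that did not arise under $\omega(C)\le 27$. I expect the technical heart to be a refined catalogue of reduced configurations, each eliminated by a tailored two-color exchange along a circuit in $H_{a_i}\triangle H_{a_j}$ rather than a single $H_a$; such two-color swaps can trade weight between $W_0$ and a nonzero class without creating new zeros off $C$, and they appear to be precisely the tool needed to push past the previous bound of $27$.
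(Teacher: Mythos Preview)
Your proposal is a high-level plan rather than a proof, and several of its concrete choices do not match the structure that actually drives the argument.

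First, a small but telling miss: the case $\omega(C)\not\equiv 0\pmod 4$ is trivial. If $4\nmid\omega(C)$ then some colour class $E_{f=a}(C)$ has weight strictly below $\tfrac14\omega(C)$; if $a\neq(0,0)$ one permutes colours so that $a=(1,1)$ and replaces $f$ by $f+f_C$. So your ``concretely $\omega(W_0)\ge 9$ when $\omega(C)=35$'' is aimed at a vacuous case; the real work happens only for $\omega(C)\in\{4,8,\dots,32\}$, where minimality forces every class to have weight exactly $\tfrac14\omega(C)$.

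Second, your discharging scheme over ``maximal monochromatic arcs'' collapses. Under the minimality hypotheses one shows, via the lifting operation, that for every $g\in\mathcal{R}_C(f)$ and every $a\in\mathbb{Z}_2\times\mathbb{Z}_2$ the set $E_{g=a}(C)$ is a matching. Hence every monochromatic arc has length one, and there is nothing to discharge along arcs. The right combinatorial object is instead the \emph{maximal $E_{g=(0,0)}$-alternating segment}: a maximal subpath of $C$ whose edges alternate between $E_{g=(0,0)}$ and its complement, maximality being taken over the entire equivalence class $\mathcal{R}_C(f)$.

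Third, the exchange tool is more delicate than your $H_a$ cycles. The paper works with the subgraph $P_{a,b}$ induced by $(E_{g=a}(G)\cup E_{g=b}(G))\setminus E(C)$ and tracks its odd-degree vertices, all of which lie on $C$. If two consecutive $C$-edges at $x$ carry colours $a$ and $b$, then $x$ is odd in $P_{a,a+b}$; the path in $P_{a,a+b}$ from $x$ to the other odd vertex $y$ in its component, together with one of the two $C$-arcs from $x$ to $y$, gives a circuit $C'$ with $C'\setminus E(C)\subseteq E_{g=a}\cup E_{g=a+b}$, so that $g+f_{C',b}$ stays in $\mathcal{R}_C(f)$. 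The position of $y$ along the alternating segment (encoded by an index $t$) is then pinned down by repeated applications of the equal-weight constraint, and the proof is a nested case analysis on $|E(S)|$, $t$, and auxiliary indices $t',t'',\bar t$ coming from iterated colour swaps. Your proposal never isolates this off-$C$ parity mechanism, and without it the ``two-colour exchanges along $H_{a_i}\triangle H_{a_j}$'' you anticipate cannot be located or controlled.

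In short: the contradiction setup and the idea of colour swaps are right, but the structural backbone --- matching property of colour classes, alternating segments, and the $P_{a,b}$ odd-vertex index --- is missing, and the plan as written does not yield a proof.
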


Theorem $\ref{1.3}$ can be restated as follows.
\begin{thm}
	\it Let $G$ be a graph with a circuit $C$.~If~$ |E(C)|\le 35$~and there is a $4$-flow  $(D,f)$~in $G$~such that $E(G)- E(C)\subseteq supp(f)$,~then there is a $4$-flow  $(D,g)$~in $G$ such that $E(G)- E(C)\subseteq supp(g)$ and $|E_{g=0}(C)|$ $< \frac{1}{4}|E(C)|$.
\end{thm}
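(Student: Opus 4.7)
The final theorem is the unweighted specialization of Theorem~\ref{1.3}, rephrased in the language of integer $4$-flows rather than $\mathbb{Z}_2\times\mathbb{Z}_2$-flows. My plan is therefore to deduce it as a direct corollary of Theorem~\ref{1.3}, via the support-preserving equivalence between integer $4$-flows on $G$ and $\mathbb{Z}_2\times\mathbb{Z}_2$-flows on $G$ having the same support. This equivalence is the classical consequence of Tutte's theorem: a subset $S\subseteq E(G)$ is the support of some integer $4$-flow if and only if it is the support of some $\mathbb{Z}_2\times\mathbb{Z}_2$-flow, and in either direction the flow can be recovered on any prescribed orientation.

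First I would convert the given $4$-flow $(D,f)$ into a $\mathbb{Z}_2\times\mathbb{Z}_2$-flow $\widetilde{f}$ on $G$ with $supp(\widetilde{f})=supp(f)$, so in particular $E(G)-E(C)\subseteq supp(\widetilde{f})$. Then I would set $\omega\equiv 1$, so that $\omega(C)=|E(C)|\le 35$, and invoke Theorem~\ref{1.3} to obtain a $\mathbb{Z}_2\times\mathbb{Z}_2$-flow $\widetilde{g}$ in $G$ satisfying $E(G)-E(C)\subseteq supp(\widetilde{g})$ and
\[
\bigl|E_{\widetilde{g}=(0,0)}(C)\bigr| \;=\; \omega\bigl(E_{\widetilde{g}=(0,0)}(C)\bigr) \;<\; \tfrac{1}{4}\,\omega(C) \;=\; \tfrac{1}{4}|E(C)|.
\]

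Finally I would invert the equivalence to lift $\widetilde{g}$ to an integer $4$-flow $(D,g)$ with $supp(g)=supp(\widetilde{g})$, keeping the original orientation $D$; this last step is possible because $\mathbb{Z}_2\times\mathbb{Z}_2$-flows are orientation-independent (as the excerpt itself observes), and hence any fixed orientation admits an integer $4$-flow lift on the same edge set. The pair $(D,g)$ then satisfies both $E(G)-E(C)\subseteq supp(g)$ and $|E_{g=0}(C)|<\tfrac{1}{4}|E(C)|$, as required.

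There is no substantive obstacle in this reduction: all the combinatorial difficulty has been absorbed into Theorem~\ref{1.3}, whose proof is presumably the heart of the paper and is handled through a careful structural analysis of the zero-edges along $C$ under the constraint $\omega(C)\le 35$. The only care needed in the restatement is to verify that the passage between $\mathbb{Z}_2\times\mathbb{Z}_2$-flows and integer $4$-flows is truly support-preserving and orientation-free, which is standard and uses none of the hypotheses on $|E(C)|$.
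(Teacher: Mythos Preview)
Your proposal is correct and is exactly the approach the paper takes: the paper introduces this theorem with the sentence ``Theorem~\ref{1.3} can be restated as follows,'' treating it as an immediate consequence of Theorem~\ref{1.3} via the support-preserving equivalence between integer $4$-flows and $\mathbb{Z}_2\times\mathbb{Z}_2$-flows (Lemma~2.1, Tutte's theorem), with the unit weight $\omega\equiv 1$. The only minor quibble is that the freedom to keep the original orientation $D$ for the output integer $4$-flow comes not from the orientation-independence of $\mathbb{Z}_2\times\mathbb{Z}_2$-flows but from the trivial fact that reversing an edge in an integer flow just negates its value; this does not affect the validity of your argument.
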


As an application of this result,~one can use the similar way in \cite{Fan} to check that if $G$ is a bridgeless graph   with minimum degree at least three,~then $cc(G)< \frac{34}{21}|E(G)|\approx 1.6190|E(G)|$;~if $G$ is a bridgeless and loopless graph   with minimum degree at least three,~then $cc(G)< \frac{50}{31}|E(G)|\approx 1.6129|E(G)|$.

\section{Preliminaries and Lemmas}
Let $G$ be a graph and $H$ be a subgraph of $G$. For a $\mathbb{Z}_2\times\mathbb{Z}_2$-flow~$f$,~define an \it equivalent class \rm of $f$ with respect to $H$ by 
$$\mathcal{R}_H(f)=\left\lbrace g:g~\textrm{is a}~\mathbb{Z}_2\times\mathbb{Z}_2\textrm{-flow in $G$ and $supp(g)-E(H)=supp(f)-E(H)$}\right\rbrace .$$

And for $a\in\mathbb{Z}_2\times\mathbb{Z}_2$,~let $$ E_{f=a}(H)=\left\lbrace e\in E(H):f(e)=a \right\rbrace. $$

Let $f$ be a $\mathbb{Z}_2\times \mathbb{Z}_2$-flow in $G$~and~$\omega:E(G)\to \mathbb{Z}^+,$~$ C$~is a circuit of $G$.~For a $\mathbb{Z}_2\times \mathbb{Z}_2$-flow $g\in \mathcal{R}_C(f)$~and $xy,yz\in E(C)$ with $g(xy)=g(yz)$,~we denote by $G^*$ the graph obtained from $G$ by \it lifting \rm $xy,yz\in E(C)$,~that is,~deleting $xy,yz$ and adding a new edge $e^*=xz$ and let

\begin{equation}
\omega^*(e)=\left\lbrace 
	\begin{array}{lllllllll}
		\omega(xy)+\omega(yz)&&&\rm if~\it e=e^*;\\
		\omega(e)&&&\rm if~\it e\in E(G^*)-\left\lbrace e^*\right\rbrace .
	\end{array}
	\nonumber
	\right.
\end{equation}
the resulted circuit is denoted by $C^*$.

Let 
\begin{equation}
	f^*(e)=\left\lbrace 
	\begin{array}{lllllllll}
		g(xy)&&&\rm if~\it e=e^*;\\
		g(e)&&&\rm if~\it e\in E(G^*)-\left\lbrace e^*\right\rbrace .
	\end{array}
	\nonumber
	\right.
\end{equation}

Then $f^*$~is also a $\mathbb{Z}_2\times \mathbb{Z}_2$-flow in $G$ and $supp(f^*)-E(C^*)=supp{f}-E(C)$.~In addition,~$|E(G^*)|<|E(G)|$.

Let $G$ be a graph admitting a $\mathbb{Z}_2\times\mathbb{Z}_2$-flow~$f$.~$C$~is a circuit of $G$.~A segment $S$ is $E_{f=(0,0)}$-$alternating$ in $C$ if the edges of $S$ are alternately in $E_{f=(0,0)}\cap E(C)$ and $E(C)-E_{f=(0,0)}$. An $E_{f=(0,0)}$-alternating segment \rm is \it maximal \rm if for any $g\in \mathcal{R}_C(f)$,~there is no $E_{g=(0,0)}$-$alternating$ segment~$S'$~such that
$$ E_{f=(0,0)}(S)\subseteq E_{g=(0,0)}(S')\textrm{~~~and~~~$|E(S)|<|E(S')|$.}$$

The following two lemmas are used in the proof of \rm Theorem $\ref{1.3}$.

\begin{lemma}\rm(\cite{T})
\it Let $\Gamma$ be an abelian group of order $k$. A graph $G$ admits a nowhere-zero $k$-flow if and only if $G$ admits a nowhere-zero $\Gamma$-flow.
\end{lemma}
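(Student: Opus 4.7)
The plan is to prove this classical theorem of Tutte in two stages: first, show that for a fixed orientation $D$ of $G$ the number $N(G,\Gamma)$ of nowhere-zero $\Gamma$-flows on $D(G)$ depends only on $k=|\Gamma|$; second, show that existence of a nowhere-zero $\mathbb{Z}_k$-flow is equivalent to existence of a nowhere-zero integer $k$-flow.

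For the first stage I would apply inclusion--exclusion on the set of edges where the flow is forced to vanish:
\[
N(G,\Gamma)=\sum_{S\subseteq E(G)}(-1)^{|S|}\bigl|\{\Gamma\text{-flows of }G\text{ vanishing on }S\}\bigr|.
\]
A $\Gamma$-flow on $G$ vanishing on $S$ is the same as a $\Gamma$-flow on $G-S$, and by the standard cycle-space argument (choose a spanning forest; the fundamental circuits give an independent basis) the set of all $\Gamma$-flows on any graph $H$ is a $\Gamma$-module of rank $|E(H)|-|V(H)|+c(H)$. Hence each summand equals $|\Gamma|^{r(G-S)}$ with $r(G-S)$ independent of the algebraic structure of $\Gamma$, so $N(G,\Gamma)$ is a polynomial in $k=|\Gamma|$ whose coefficients depend only on $G$. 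In particular $N(G,\Gamma)>0$ if and only if $N(G,\mathbb{Z}_k)>0$, which reduces the theorem to comparing $\mathbb{Z}_k$-flows with integer $k$-flows.

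For the second stage, ``integer $k$-flow $\Rightarrow$ $\mathbb{Z}_k$-flow'' is immediate by reducing modulo $k$, since $0<|f(e)|<k$ guarantees that the reduction is nowhere zero. For the converse, given a nowhere-zero $\mathbb{Z}_k$-flow $\bar f$, I would consider the set $\mathcal{F}$ of integer flows $h:E(G)\to\mathbb{Z}$ with $h\equiv\bar f\pmod k$; this set is nonempty because any componentwise lift of $\bar f$ into $\{1,\dots,k-1\}$ has vertex defects that are multiples of $k$, and these defects can be corrected by adding an integer flow. Pick $h\in\mathcal{F}$ minimizing $\sum_{e}|h(e)|$. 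The main obstacle, and the heart of the proof, is to show $|h(e)|<k$ for every edge: if $|h(e_0)|\ge k$ for some $e_0$, then by the flow-decomposition theorem applied to $h$ there is an oriented circuit through $e_0$ along which $h$ is strictly positive; subtracting $k$ times the indicator flow of this circuit gives a new member of $\mathcal{F}$ with strictly smaller $\ell^{1}$-norm, contradicting the choice of $h$. The minimizer is therefore a nowhere-zero integer $k$-flow, completing the equivalence.
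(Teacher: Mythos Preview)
The paper does not prove this lemma; it merely cites Tutte's original 1954 paper and uses the result as a black box. Your proposal therefore cannot be compared against a proof in the paper, but it is a correct and standard argument for Tutte's theorem.

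One small wording issue: in the nonemptiness step for $\mathcal{F}$ you say the vertex defects of the naive lift ``can be corrected by adding an integer flow.'' Strictly speaking, an integer \emph{flow} has zero boundary, so adding one would not change the defects. What you need is an integer-valued function $g$ with $\partial g(v)=-d(v)$ (where $k\,d(v)$ is the defect at $v$), and then $f_0+kg$ lies in $\mathcal{F}$. Such a $g$ exists because $\sum_v d(v)=0$ on each component. This is a phrasing slip rather than a gap; the rest of the argument --- the flow-polynomial inclusion--exclusion for the first stage and the $\ell^1$-minimisation with circuit subtraction for the second --- is exactly the classical proof and is sound.
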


The following lemma is similar to Lemma $3.2$ in \cite{WLG}.

\begin{lemma}\label{lem1}
Let $G$ be a graph~and~$\sigma$ be a permutation on $\left\lbrace (1,0),(0,1),(1,1)\right\rbrace $.~If there is a nowhere-zero~$\mathbb{Z}_2\times\mathbb{Z}_2 $-flow~$f$,~then $\sigma f$ is a nowhere-zero~$\mathbb{Z}_2\times\mathbb{Z}_2$-flow.
\end{lemma}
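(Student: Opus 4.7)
The plan is to observe that any permutation $\sigma$ of the three non-zero elements $\{(1,0),(0,1),(1,1)\}$ extends uniquely to a group automorphism of $\mathbb{Z}_2\times\mathbb{Z}_2$ by declaring $\sigma((0,0))=(0,0)$, and then to transport the flow condition through this homomorphism vertex by vertex.

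First I would verify that the extended $\sigma$ is a group homomorphism. The only non-trivial case is $\sigma(a+b)=\sigma(a)+\sigma(b)$ for two distinct non-zero elements $a,b$. The key arithmetic fact in $\mathbb{Z}_2\times\mathbb{Z}_2$ is that $a+b$ equals the third non-zero element $c$; equivalently, the three non-zero elements sum to $(0,0)$. Since $\sigma$ is a bijection on the non-zero elements, $\sigma(a)$ and $\sigma(b)$ are two distinct non-zero elements, so $\sigma(a)+\sigma(b)$ equals the remaining non-zero element, which is exactly $\sigma(c)=\sigma(a+b)$. The cases $a+a=(0,0)$ and $a+(0,0)=a$ are immediate.

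Next I would transport the flow condition. Because $\sigma$ is a homomorphism, it commutes with finite sums, so for every vertex $v\in V(G)$,
$$(\sigma f)^+(v)=\sigma(f^+(v))=\sigma(f^-(v))=(\sigma f)^-(v),$$
and hence $\sigma f$ is itself a $\mathbb{Z}_2\times\mathbb{Z}_2$-flow. The nowhere-zero property is then automatic, since $(\sigma f)(e)=\sigma(f(e))\neq (0,0)$ whenever $f(e)\neq (0,0)$.

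I do not anticipate any serious obstacle. The only subtle point is the homomorphism check in the first step, which relies on the special feature that the automorphism group of $\mathbb{Z}_2\times\mathbb{Z}_2$ is the full symmetric group $S_3$ acting on the three non-identity elements — a property that would fail for most other abelian groups and is what makes the lemma clean in this setting.
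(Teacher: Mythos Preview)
Your proof is correct, and it takes a genuinely different route from the paper. You argue algebraically: every permutation of the non-identity elements of $\mathbb{Z}_2\times\mathbb{Z}_2$ extends to a group automorphism, and composing an automorphism with a flow preserves the balance condition at each vertex. The paper instead argues combinatorially via cycle covers: it observes that for a nowhere-zero $\mathbb{Z}_2\times\mathbb{Z}_2$-flow $f$, each preimage $E_{f=a}(G)$ is a parity subgraph, forms the two cycles $S_1=E_{f=\sigma^{-1}(1,0)}\cup E_{f=\sigma^{-1}(1,1)}$ and $S_2=E_{f=\sigma^{-1}(0,1)}\cup E_{f=\sigma^{-1}(1,1)}$, and reconstructs $\sigma f$ from this cycle cover. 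Your approach is shorter and isolates exactly the structural reason the lemma holds (namely $\mathrm{Aut}(\mathbb{Z}_2\times\mathbb{Z}_2)\cong S_3$), while the paper's argument stays closer to the parity-subgraph/cycle-cover language that is used throughout the rest of the article.
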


\proof 

By the definition of $\mathbb{Z}_2\times\mathbb{Z}_2$-flow,~we have $G-E_{g=a}(G)$~is a cycle,~then $E_{f=a}(G)$~is a partity subgraph of $G$.~Let $S_1=E_{f=\sigma^{-1}(1,0)}(G)\cup E_{f=\sigma^{-1}(1,1)}(G)$,~$S_2=E_{f=\sigma^{-1}(0,1)}(G)\cup E_{f=\sigma^{-1}(1,1)}(G)$,~then $\left\lbrace S_1,~S_2\right\rbrace$  is a cycle cover of $G$. Let 

\begin{equation}
	g(e)=\left\lbrace 
	\begin{array}{lllllllll}
		(1,0)&&&\rm if~\it e\in E(S_{\rm 1})-E(S_{\rm 2});\\
		(0,1)&&&\rm if~\it e\in E(S_{\rm 2})-E(S_{\rm 1});\\
		(1,1)&&&\rm if~\it e\in E(S_{\rm 1})\cap E(S_{\rm 2}).\\
	\end{array}
	\nonumber
	\right.
\end{equation}
We can verify that $g$ is a nowhere-zero~$\mathbb{Z}_2\times\mathbb{Z}_2$-flow of $G$ and $g=\sigma f$
$\hfill \qed$

\section{Maximal alternating segment of circuit $C$}

From now on,~we suppose there is a quadruple $(G, f, C,\omega )$ such that

(S1) $f$ is a $\mathbb{Z}_2\times \mathbb{Z}_2$-flow in $G$, $C$ is a circuit in $G$ and $\omega:E(G)\to \mathbb{Z}^+$ with $\omega(C)\le35$;

(S2) Subject to (S1), $\omega(E_{g=(0,0)}(C))\geq \frac{1}{4}\omega(C)$ for any $g\in \mathcal{R}_C(f)$;

(S3) Subject to (S1) and (S2), $\left|E(G)\right|$ is as small as possible.

For $\omega(C)\not\equiv 0 \left( \textrm{mod}4\right) $,~there is an $a\in \mathbb{Z}_2\times\mathbb{Z}_2$ such that $ E_{f=a}<\frac{1}{4}\omega(C)$.~If $a=(0,0)$,~we are done;~if $a\ne (0,0)$,~by Lemma \ref{lem1},~we can assume $a=(1,1)$,~let $g=f+f_C$,~then we are done.~So, from now on we always assume that $\omega(C)\le 32$ and  $\omega(C)\equiv 0 \left( \textrm{mod}4\right) $.

For $g\in \mathcal{R}_C(f) $ and $\left\lbrace a,b \right\rbrace \subset \left\lbrace (1,0),(0,1),(1,1)\right\rbrace $.~Let $P_{a,b}$ be the subgraph of $G$ induced by the edges in $E_{g=a}(G)\cup E_{g=b}(G)-E(C)$. Then the vertices with odd degree are in $V(C)$. Assume that $e_1$ and $e_2$ are two edges in $C$ with exactly one common end $x$ and $g(e_1)=a $,~$g(e_2)=b$.~We can obtain that $x$ is a vertex with odd degree in $ P_{a,a+b}$ immediately.~Let $y$ be another vertex with odd degree in the same component and $e_3$,~$e_4$ be two edges in $C$ with exactly one common end $y$.

Suppose $C=v_0v_1\cdots v_{|E(C)|-1}$,~$S=v_1\cdots v_{|E(S)|+1}$ and denote $v_{-1}=v_{|E(C)|-1}$.
By Lemma \ref*{lem1}, we can assume that $ g(v_0v_1)=(1,0)=a$,~$g(v_0v_{-1})=(1,1)=b$,~$g(v_{|E(S)|+1}v_{|E(S)|+2})=c$ and $g(v_{|E(S)|+2}v_{|E(S)|+3})=d$. Then $v_0$ is a vertex with odd degree in $ P_{a,a+b}$ and $v_{|E(S)|+1}$ is a vertex with odd degree in $ P_{c,c+d}$, assume $v_t$ be another vertex with odd degree in the same component of $ P_{a,a+b}$ and $v_{\bar{t}}$ be another vertex with odd degree in the same component of $ P_{c,c+d}$.~$v_1$ is a vertex with odd degree in $ P_{a,b}$,~assume $v_{t'}$ be another vertex with odd degree in the same component of $ P_{a,b}$,~moreover,~if $t'\ge |E(S)|+1$,~then there is a circuit $ C_1$~consist of a path $P_1$ from $v_1$ to $v_{t'}$ and  $v_0v_{-1}\cdots v_{t'}$.~Let $g'=g+f_{C_1,(a+b)}$,~then $v_0$ is a vertex with odd degree in $ P_{b,a+b}$ in $g'$, assume that $v_{t''}$ is another vertex with odd degree in the same component of $ P_{b,a+b}$ in $g'$.

Then we have the following lemmas.

\begin{lemma}\label{l1}
	$\omega(E_{g=a}(C))=\frac{\omega(C)}{4}\le 8$ for any $g\in \mathcal{R}_C(f)$ and each $a\in \mathbb{Z}_2\times \mathbb{Z}_2$.	
\end{lemma}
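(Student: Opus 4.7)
The plan is to get the equality $\omega(E_{g=a}(C))=\omega(C)/4$ by showing that every value $a\in\mathbb{Z}_2\times\mathbb{Z}_2$ plays the same role as $(0,0)$, via the operation of adding a constant flow supported on $C$. The bound $\le 8$ is then immediate from the reduction preceding the lemma, which guarantees $\omega(C)\le 32$.

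First I would fix $g\in\mathcal{R}_C(f)$ and apply (S2) directly to obtain $\omega(E_{g=(0,0)}(C))\ge \omega(C)/4$. Next, for each nonzero element $a\in\mathbb{Z}_2\times\mathbb{Z}_2$, I would form $g_a:=g+f_{C,a}$. Since $C$ is a circuit, $f_{C,a}$ is a $\mathbb{Z}_2\times\mathbb{Z}_2$-flow, so $g_a$ is again a $\mathbb{Z}_2\times\mathbb{Z}_2$-flow of $G$. Moreover $f_{C,a}$ is identically $(0,0)$ off $E(C)$, hence $g_a$ and $g$ coincide on $E(G)-E(C)$; in particular $\mathrm{supp}(g_a)-E(C)=\mathrm{supp}(g)-E(C)=\mathrm{supp}(f)-E(C)$, so $g_a\in\mathcal{R}_C(f)$.

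Applying (S2) to $g_a$ gives $\omega(E_{g_a=(0,0)}(C))\ge\omega(C)/4$. But an edge $e\in E(C)$ satisfies $g_a(e)=(0,0)$ iff $g(e)=a$, so $E_{g_a=(0,0)}(C)=E_{g=a}(C)$, and therefore
\begin{equation*}
\omega(E_{g=a}(C))\ge \frac{\omega(C)}{4}\qquad\text{for every }a\in\mathbb{Z}_2\times\mathbb{Z}_2.
\end{equation*}
Summing these four inequalities over the four values of $a$ yields $\omega(C)\ge \omega(C)$, which forces equality throughout, i.e.\ $\omega(E_{g=a}(C))=\omega(C)/4$ for every $a$. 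Finally, the reduction established just before the lemma allows us to assume $\omega(C)\le 32$, so $\omega(E_{g=a}(C))=\omega(C)/4\le 8$, as required.

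There is no serious obstacle here; the only point deserving attention is the verification that $g_a\in\mathcal{R}_C(f)$, which is where the structure of the equivalence class $\mathcal{R}_C(f)$ (things are only allowed to change on $E(C)$) interacts cleanly with the fact that $f_{C,a}$ is supported exactly on $E(C)$. Everything else is bookkeeping on the partition $E(C)=\bigsqcup_a E_{g=a}(C)$ and the hypothesis (S2).
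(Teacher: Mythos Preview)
Your proof is correct and follows essentially the same idea as the paper: shift $g$ by a constant flow supported on $C$ so that the class $E_{g=a}(C)$ becomes the zero-class, then invoke (S2). The only cosmetic difference is that the paper first uses the permutation Lemma~\ref{lem1} to reduce to $a=(1,1)$ and then adds $f_C=f_{C,(1,1)}$, whereas you add $f_{C,a}$ directly for arbitrary $a$; your version is slightly more streamlined but the content is the same.
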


\begin{proof}
	Suppose to be contrary that there is a flow $g\in \mathcal{R}_C(f)$ and $a\in \mathbb{Z}_2\times \mathbb{Z}_2$ such that $\omega(E_{g=a}(C))< \frac{\omega(C)}{4}$. 
	By (S2), $\omega(E_{g=(0,0)}(C))\ge\frac{\omega(C)}{4}$. By Lemma \ref{lem1}, we can assume that $\omega(E_{g=(1,1)}(C))<\frac{\omega(C)}{4}$. Let $g'=g+f_C$, then $(G, g', C, \omega)$ is a new quadruple satisfying (S1). Moreover, $g'\in \mathcal{R}_C(f)$ and $$\omega(E_{g'=(0,0)}(C))<\frac{\omega(C)}{4}.$$
	This contradicts (S2). 
\end{proof}

\begin{lemma}\label{l2}
	$E_{g=a}(C)$ is a matching for any $g\in \mathcal{R}_C(f)$ and each $a\in \mathbb{Z}_2\times \mathbb{Z}_2$.
\end{lemma}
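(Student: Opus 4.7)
The plan is a proof by contradiction driven by the minimality assumption (S3) together with the lifting operation introduced in Section~2. Suppose for contradiction that some $g \in \mathcal{R}_C(f)$ and some $a \in \mathbb{Z}_2 \times \mathbb{Z}_2$ violate the conclusion; then $C$ contains two consecutive edges $xy, yz$ with $g(xy) = g(yz) = a$. I apply the lifting of Section~2 to $g$ at the pair $(xy, yz)$ to obtain the quadruple $(G^*, f^*, C^*, \omega^*)$. By construction $\omega^*(C^*) = \omega(C) \le 32$, the map $f^*$ is a $\mathbb{Z}_2 \times \mathbb{Z}_2$-flow in $G^*$, and $\mathrm{supp}(f^*) - E(C^*) = \mathrm{supp}(g) - E(C) = \mathrm{supp}(f) - E(C)$, so $E(G^*) - E(C^*) \subseteq \mathrm{supp}(f^*)$. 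Hence $(G^*, f^*, C^*, \omega^*)$ satisfies (S1), while $|E(G^*)| < |E(G)|$.

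By the minimality (S3), the quadruple $(G^*, f^*, C^*, \omega^*)$ cannot satisfy (S2), so there exists $h^* \in \mathcal{R}_{C^*}(f^*)$ with $\omega^*(E_{h^* = (0,0)}(C^*)) < \tfrac{1}{4}\omega^*(C^*)$. I then define $h$ on $E(G)$ by setting $h(xy) = h(yz) = h^*(e^*)$ and $h(e) = h^*(e)$ for every other edge. The essential verification is that $h$ is a $\mathbb{Z}_2 \times \mathbb{Z}_2$-flow in $G$: at the absorbed vertex $y$ the two new contributions give $h(xy) + h(yz) = 2h^*(e^*) = (0,0)$, and at $x$ and $z$ the single contribution $h^*(e^*)$ that appeared via $e^*$ in $G^*$ is simply replaced by $h(xy)$ or $h(yz)$, which equals $h^*(e^*)$; conservation is therefore preserved at every vertex.

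Finally, from $\mathrm{supp}(h) - E(C) = \mathrm{supp}(h^*) - E(C^*) = \mathrm{supp}(f) - E(C)$ one has $h \in \mathcal{R}_C(f)$, and from $h(xy) = h(yz) = h^*(e^*)$ together with $\omega(xy) + \omega(yz) = \omega^*(e^*)$ one obtains $\omega(E_{h=(0,0)}(C)) = \omega^*(E_{h^*=(0,0)}(C^*)) < \tfrac{1}{4}\omega^*(C^*) = \tfrac{1}{4}\omega(C)$, contradicting (S2). I expect the main obstacle to be purely bookkeeping: checking that the lift-and-unlift preserves the flow property at the three touched vertices $x, y, z$ and that the weighted zero-sets on $C$ and $C^*$ match exactly. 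Once these items are confirmed, no case analysis on $a$ is required and the contradiction follows at once.
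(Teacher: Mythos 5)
Your proposal is correct and follows the paper's own route: the paper likewise lifts the two consecutive edges $xy,yz$ with $g(xy)=g(yz)$ to get a quadruple $(G^*,f^*,C^*,\omega^*)$ satisfying (S1) and (S2) with $|E(G^*)|=|E(G)|-1$, contradicting (S3). The only difference is presentational: the paper asserts without detail that the lifted quadruple satisfies (S2), and your explicit unlifting of a hypothetical $h^*$ back to a flow $h\in\mathcal{R}_C(f)$ is exactly the verification that assertion requires.
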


\begin{proof}
	Suppose to be contrary that there is a function $g\in \mathcal{R}_C(f)$ and $a\in \mathbb{Z}_2\times \mathbb{Z}_2$ such that $E_{g=a}$ is not a matching. Then there are two edges $xy, yz\in E(C)$ such that $g(xy)=g(yz)=a$. By \it lifting \rm $xy$ and $yz$, we can obtain a new quadruple $(G, f^*, C^*,\omega ^*)$ which satisfies (S1) and (S2), however, $\left|E(G^*)\right|=\left|E(G)\right|-1,$ which is a contradiction as condition (S3).
\end{proof}

\begin{lemma}\label{l3}
	Let $e_1$ and $e_2$ be two edges in $C$ with exactly one common end $x$ and $g(e_1)=a $,~$g(e_2)=b$,~$y$ be another vertex with odd degree in the same component of $ P_{a,a+b}$ and $e_3$,~$e_4$ be two edges in $C$ with exactly one common end $y$. Then $g(e_3)+g(e_4)\ne b$ Let $Q$ be a path of $C$ from $x$ to $y$,~then for each $c\in  \mathbb{Z}_2\times \mathbb{Z}_2$,~then $ \omega(E_{g=c}(Q))=\omega(E_{g=c+b}(Q))$.
\end{lemma}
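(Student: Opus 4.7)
The plan is to handle the two assertions separately. For the parity claim $g(e_3)+g(e_4)\ne b$, I would compute the flow condition at $y$ directly in $\mathbb{Z}_2\times\mathbb{Z}_2$. Since $g\in\mathcal{R}_C(f)$ and $E(G)-E(C)\subseteq supp(g)$, every non-$C$ edge at $y$ carries a value in $\{a,b,a+b\}$; writing $m_c$ for the number of such edges with $g$-value $c$, the condition $\sum_{e\ni y}g(e)=(0,0)$ rearranges to
\begin{equation*}
g(e_3)+g(e_4)=(m_a+m_{a+b})\,a+(m_b+m_{a+b})\,b \quad (\text{in }\mathbb{Z}_2\times\mathbb{Z}_2).
\end{equation*}
The hypothesis that $y$ has odd degree in $P_{a,a+b}$ says exactly that $m_a+m_{a+b}$ is odd, so the coefficient of $a$ equals $1$ and $g(e_3)+g(e_4)\in\{a,a+b\}$, which excludes $b$.

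For the weight identity, I would construct an auxiliary flow. Pick a path $L$ in the component of $P_{a,a+b}$ containing $x$ and $y$. Since $L\subseteq G-E(C)$ while $Q\subseteq E(C)$, the edge sets of $L$ and $Q$ are disjoint, and a vertex-by-vertex degree count shows that $L\cup Q$ is an even subgraph: $x$ and $y$ each have degree $1+1=2$, interior vertices of exactly one of $L,Q$ get degree $2$, and vertices interior to both get degree $4$. Hence the function $h$ defined by $h(e)=b$ on $L\cup Q$ and $h(e)=(0,0)$ elsewhere is a $\mathbb{Z}_2\times\mathbb{Z}_2$-flow, and so is $g':=g+h$. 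On $L$ the values of $g$ lie in $\{a,a+b\}$, so adding $b$ merely swaps them, keeping all $L$-edges in the support; on every other non-$C$ edge $g'=g$. Therefore $g'\in\mathcal{R}_C(f)$.

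The proof then closes with two applications of Lemma \ref{l1}: for every $c\in\mathbb{Z}_2\times\mathbb{Z}_2$ both $\omega(E_{g=c}(C))$ and $\omega(E_{g'=c}(C))$ equal $\omega(C)/4$. Splitting each over $Q$ and $C-Q$ and using that $g'=g$ off $Q$ while $g'=g+b$ on $Q$, one obtains
\begin{equation*}
\omega(E_{g=c}(Q))+\omega(E_{g=c}(C-Q))=\omega(E_{g=c+b}(Q))+\omega(E_{g=c}(C-Q)),
\end{equation*}
which collapses to $\omega(E_{g=c}(Q))=\omega(E_{g=c+b}(Q))$, as desired. The only real subtlety is choosing $L$ inside $P_{a,a+b}$ rather than an arbitrary $xy$-path on $G-E(C)$: this is precisely what guarantees that $g+h$ remains nowhere-zero on non-$C$ edges, so that the modified flow lies in $\mathcal{R}_C(f)$ and Lemma \ref{l1} is available for it.
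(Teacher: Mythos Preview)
Your argument is correct and matches the paper's approach: build the auxiliary flow $g'=g+f_{L\cup Q,\,b}\in\mathcal{R}_C(f)$ and compare $\omega(E_{g=c}(C))$ with $\omega(E_{g'=c}(C))$ via Lemma~\ref{l1} to obtain the weight identity on $Q$. The only cosmetic difference is in the first assertion: the paper deduces $g(e_3)+g(e_4)\ne b$ by observing that equality would give $g'(e_3)=g'(e_4)$, contradicting Lemma~\ref{l2}, whereas you reach the same conclusion by reading off the flow balance at $y$ directly.
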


\begin{proof}
	If $g(e_3)+g(e_4)= b$, then $g'(e_3)=g'(e_4)$, which is a contradiction as Lemma \ref{l2}. Thus, we have $g(e_3)+g(e_4)\ne b$.

If $\omega(E_{g=c}(Q))\ne\omega(E_{g=c+b}(Q))$, then $ \omega(E_{g=c}(Q))\ne \frac{\omega(C)}{4}$, which is a contradiction as Lemma $\ref{l1}$. Thus, we have $\omega(E_{g=c}(Q))=\omega(E_{g=c+b}(Q))$.
\end{proof}

\begin{lemma}\label{l6}
	Let $P_1=v_1v_2\cdots v_r$,~$P_2=v_{s}v_{s+1}\cdots v_{t}$ be two paths of $C$ $(r\ge s+1)$.~Let $Q_1$~be a path in $ P_{a,a+b}$ from $v_1$ to $v_r$ and $Q_2$~be a path in $ P_{a,a+b}$ from $v_s$ to $v_t$,~then $$ \omega(E_{g=c}(v_1\cdots v_{t}))=\omega(E_{g=c+b}(v_1\cdots v_{s}))$$
\end{lemma}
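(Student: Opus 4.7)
The strategy is to apply Lemma \ref{l3} twice, once to each of the pairs $(Q_1,P_1)$ and $(Q_2,P_2)$, and then combine the two resulting weight identities by a short algebraic manipulation, using Lemma \ref{l1} to eliminate the contribution from the overlap.

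First I would invoke Lemma \ref{l3} with the path $Q_1$ and the $C$-path $P_1=v_1\cdots v_r$: since $Q_1$ lies in $P_{a,a+b}$ and its endpoints $v_1,v_r$ are odd-degree vertices in the same component of $P_{a,a+b}$, the lemma yields, for every $c\in\mathbb{Z}_2\times\mathbb{Z}_2$,
\[
\omega(E_{g=c}(P_1))=\omega(E_{g=c+b}(P_1)).
\]
The identical argument applied to $Q_2$ and $P_2=v_s\cdots v_t$ gives
\[
\omega(E_{g=c}(P_2))=\omega(E_{g=c+b}(P_2)).
\]

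Next I would partition the segment $v_1\cdots v_t$ according to the four marked vertices. Writing $A=v_1\cdots v_s$, $X=v_s\cdots v_r$ (the overlap of $P_1$ and $P_2$), and $B=v_r\cdots v_t$ under the natural assumption $1\le s\le r\le t$, we have $P_1=A\cup X$, $P_2=X\cup B$, and $v_1\cdots v_t=A\cup X\cup B$. Substituting these decompositions into the two identities above produces two linear equations among the six weights $\omega(E_{g=c}(A))$, $\omega(E_{g=c}(X))$, $\omega(E_{g=c}(B))$ and their $c+b$ counterparts.

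The final step is to combine these two linear equations so that the contribution of the middle segment $X$ cancels, leaving the stated identity that relates $\omega(E_{g=c}(v_1\cdots v_t))$ and $\omega(E_{g=c+b}(v_1\cdots v_s))$. Here Lemma \ref{l1}, which pins the total weight $\omega(E_{g=c}(C))$ at $\omega(C)/4$ independently of $c$, is used to convert relations on the complementary arc of $C$ into relations on the segments $A$, $X$, $B$. The main technical obstacle is precisely this bookkeeping: one must be careful with the cyclic order of $v_1,v_s,v_r,v_t$ along $C$ and with degenerate cases in which $X$, $A$, or $B$ collapses to a trivial path. Once the cases are set up correctly, the algebra is routine.
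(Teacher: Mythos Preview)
Your plan has a genuine gap. Applying Lemma~\ref{l3} twice gives
\[
\omega(E_{g=c}(P_1))=\omega(E_{g=c+b}(P_1)),\qquad
\omega(E_{g=c}(P_2))=\omega(E_{g=c+b}(P_2)),
\]
and with your decomposition $A=v_1\cdots v_s$, $X=v_s\cdots v_r$, $B=v_r\cdots v_t$ these read $\delta_A+\delta_X=0$ and $\delta_X+\delta_B=0$, where $\delta_Y:=\omega(E_{g=c}(Y))-\omega(E_{g=c+b}(Y))$. Lemma~\ref{l1} applied to $g$ then adds only $\delta_A+\delta_X+\delta_B+\delta_D=0$, with $D$ the complementary arc of $C$. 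These three linear relations in four unknowns leave the one--parameter family $\delta_A=\delta_B=-\delta_X=-\delta_D$; no amount of bookkeeping with the cyclic order changes this count, and invoking Lemma~\ref{l1} to pass to complementary arcs adds nothing new (the identity on $C\setminus P_i$ is the very same relation as on $P_i$). So the ``routine algebra'' you promise at the end cannot close.

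What is missing is the construction of an auxiliary flow. The paper sets $C_i=P_i\cup Q_i$ and forms $g_1=g+f_{C_1,b}+f_{C_2,b}$. Since $Q_1,Q_2\subseteq P_{a,a+b}$, adding $b$ on these edges keeps them in the support, so $g_1\in\mathcal{R}_C(f)$; and on $C$ the two $b$--shifts cancel on the overlap $X$ while each acts once on $A$ and on $B$. Applying Lemma~\ref{l1} to $g_1$ (not to $g$) then yields the genuinely new relation $\delta_A+\delta_B=0$, which combined with $\delta_A=\delta_B$ forces $\delta_A=\delta_B=0$ and hence $\delta_X=0$. The simultaneous use of both cycles $C_1,C_2$ in a single modified flow is precisely the content of the lemma and cannot be replaced by two separate applications of Lemma~\ref{l3}.
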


\begin{proof}
By Lemma \ref{l3},~$ \omega(E_{g=c}(v_1\cdots v_{t}))=\omega(E_{g=c+b}(v_1\cdots v_{t}))$.~Let $C_i=P_i\cup Q_i$,~$i=1,2$. We can check that $g_1=g+f_{C_1,b}+f_{C_2,b}\in \mathcal{R}_C(f)$,~then $\omega(E_{g=c}(v_1\cdots v_{s}))+\omega(E_{g=c}(v_r\cdots v_{t}))=\omega(E_{g=c+b}(v_1\cdots v_{s}))+\omega(E_{g=c+b}(v_r\cdots v_{t}))$,
we have $$\omega(E_{g=c}(v_1\cdots v_{t}))=\omega(E_{g=c+b}(v_1\cdots v_{s}))$$
\end{proof}
Let $\Omega$ be the maximum weight of $C$.

\begin{lemma}\label{l4}
	Let $g\in \mathcal{R}_C(f)$,~$S$~be a maximal $E_{g=(0,0)}$-alternating segment of $C$. If there is an edge $e\in E_{g=(0,0)}(S)$ with $\omega(e)=\Omega$,~then $|E(S)|\le |E(C)|-2$.~If~$\Omega= 3$ and there is an edge $e\in E_{f=(0,0)}(S)$ such that $\omega(e)=2$,~then $|E(S)|\le |E(C)|-2$.
\end{lemma}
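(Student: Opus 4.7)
The plan is to argue by contradiction: assume $|E(S)| > |E(C)|-2$, hence $|E(S)| = |E(C)|-1$, and derive a flow $g' \in \mathcal{R}_C(f)$ with $\omega(E_{g'=(0,0)}(C)) < \omega(C)/4$, contradicting Lemma \ref{l1}. First I relabel $C = v_0 v_1 \cdots v_{n-1}$ with $n = |E(C)|$ so that $S = v_1 v_2 \cdots v_{n-1} v_0$ and the unique missing edge is $v_0 v_1$. The alternating property of $S$, combined with the matching property of $E_{g=(0,0)}(C)$ from Lemma \ref{l2}, rigidly pins down the $(0,0)$-pattern on $C$: the $(0,0)$-edges of $S$ occupy every other position, and $g(v_0 v_1) \neq (0,0)$ (since otherwise $v_0 v_1$ would meet a second $(0,0)$ edge at $v_1$ or at $v_0$).

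Next I invoke the framework of Section 3 with $a = g(v_0 v_1)$ and $b = g(v_{n-1} v_0)$. The vertex $v_0$ has odd degree in $P_{a,a+b}$, so there is a partner vertex $v_t$ in the same component joined to $v_0$ by a path $Q \subseteq P_{a,a+b}$. Forming a cycle $C_1 = Q \cup P$, with $P$ a $v_0$-$v_t$ arc of $C$, and setting $g' = g + f_{C_1, c}$ shifts the $g$-values on $P$ by $c$ while fixing the rest of $C$. A single such flip preserves $\omega(E_{g'=(0,0)}(C))$ by Lemma \ref{l3}, so the argument follows the two-flip construction from the proof of Lemma \ref{l6}: one chooses two cycles $C_1 = Q_1 \cup P_1$, $C_2 = Q_2 \cup P_2$, forms $g' = g + f_{C_1, b} + f_{C_2, b}$, and arranges the arcs $P_1, P_2$ so that the weight-$\Omega$ edge $e$ is flipped out of the $(0,0)$-class exactly once. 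Since $\Omega$ is the maximum weight on $C$, every edge newly entering the $(0,0)$-class has weight at most $\Omega$; choosing $P_1, P_2$ to avoid bringing in another weight-$\Omega$ edge then forces a strict decrease in the $(0,0)$-weight.

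For Claim 2 the strategy is parallel: with $\Omega = 3$ and $\omega(e) = 2$, all edge weights on $C$ lie in $\{1,2,3\}$, and the two-flip construction is arranged so that $e$ leaves the $(0,0)$-class and the edges entering it have total weight strictly less than $2$, again violating Lemma \ref{l1}. The arithmetic is tighter here than in Claim 1, but the extra information $\Omega = 3$ constrains the possible re-entering weights enough to make the accounting work.

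The main obstacle is the case analysis. Depending on the parity of $n$, the starting value of the alternating pattern on $S$, the specific values of $a$ and $b$, and the position of $e$ within $S$, one has to verify that a suitable pair of arcs $P_1, P_2$ and a suitable partner vertex $v_t$ exist so that no weight-$\Omega$ (respectively weight-$2$) edge re-enters the $(0,0)$-class after the double flip. The weight-balancing identities of Lemmas \ref{l3} and \ref{l6} are the key accounting tools throughout; ensuring they can be applied simultaneously in every subcase is where most of the technical work lies.
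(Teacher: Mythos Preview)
Your plan is far more complicated than needed and, as written, is not a proof. You leave the decisive case analysis undone, and the pivotal step---choosing $P_1,P_2$ so that the double flip strictly lowers $\omega(E_{g'=(0,0)}(C))$---is never established. In fact it cannot be established: any $g' = g + f_{C_1,b} + f_{C_2,b}$ lies in $\mathcal{R}_C(f)$, and Lemma~\ref{l1} already says that \emph{every} member of $\mathcal{R}_C(f)$ has $(0,0)$-weight exactly $\omega(C)/4$. You yourself observe that a single flip preserves this weight; a double flip is two single flips in succession, so it preserves the weight as well. Hence no choice of arcs produces a strict decrease, and the intended contradiction never materializes. The hypothesis $|E(S)|=|E(C)|-1$ does not change this, because the invariance of the $(0,0)$-weight follows from (S1)--(S3) alone.

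The paper's argument avoids flow manipulation entirely and is a short counting estimate using only Lemmas~\ref{l1}, \ref{l2} and the standing bound $\omega(C)\le 32$. Suppose $|E(S)|=|E(C)|-1$. Since $S$ alternates and $E_{g=(0,0)}(C)$ is a matching, $|E(C)\setminus E_{g=(0,0)}| = |E_{g=(0,0)}(C)|$. One $(0,0)$-edge has weight $\Omega$ and the rest have weight at least $1$, so $\omega(C)/4 = \omega(E_{g=(0,0)}(C)) \ge \Omega + |E_{g=(0,0)}(C)| - 1$, giving $|E(C)\setminus E_{g=(0,0)}| \le \omega(C)/4 + 1 - \Omega$. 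Bounding each non-$(0,0)$ edge by $\Omega$ then yields
\[
\omega(C) \;\le\; \frac{\omega(C)}{4} + \Omega\Bigl(\frac{\omega(C)}{4} + 1 - \Omega\Bigr),
\]
and maximizing the right side over $\Omega$ gives $0 \le \tfrac{1}{64}(\omega(C)-20)^2 - 6$, which is false for every $\omega(C)\le 32$. For the second claim the distinguished $(0,0)$-edge has weight $2$ rather than $\Omega$, so the count becomes $|E(C)\setminus E_{g=(0,0)}| \le \omega(C)/4 + 2 - \Omega$; with $\Omega=3$ the displayed inequality reduces to $0 \le -3$. No path-switching and no case analysis are required.
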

\proof Since $E_{g=\left( 0,0\right) }(S)$ is a matching by Lemma $\ref{l2}$~and the definition of $S$,~$|E(S)|\le |E(C)|-1$.

Suppose that $|E(S)|=|E(C)|-1$.~If there is an edge $e\in E_{g=(0,0)}(S)$ with $\omega(e)=\Omega$,~then~

\begin{equation}
	\begin{array}{lllllllllll}
		0=\omega(C)-\omega(C)&\le |E_{f=(0,0)}\cap E(C)|+\Omega|E(C)-E_{f=(0,0)}|-\omega(C)\\
		&\le \frac{\omega(C)}{4}+\Omega(\frac{\omega(C)}{4}+1-\Omega)-\omega(C)\\
		&\le \frac{1}{64}\left(\omega(C) -20\right)^2-6<0,
	\end{array}
	\nonumber
\end{equation}
a contradiction.~If~$\Omega= 3$ and there is an edge $e\in E_{f=(0,0)}(S)$ such that $\omega(e)=2$,~then \begin{equation}
	\begin{array}{lllllllllll}
		0=\omega(C)-\omega(C)&\le |E_{f=(0,0)}\cap E(C)|+\Omega|E(C)-E_{f=(0,0)}|-\omega(C)\\
		&\le \frac{\omega(C)}{4}+\Omega(\frac{\omega(C)}{4}+2-\Omega)-\omega(C)\\
		&=-3<0,
	\end{array}
	\nonumber
\end{equation} a contradiction.
$\hfill \qed$

\begin{lemma}\label{l5}
	Let $g\in \mathcal{R}_C(f)$,~$S$~be a maximal $E_{g=(0,0)}$-alternating segment of $C$, then $5\le t\le |E(S)|$ and $t\ne 6$. If $t=7$, then $g(v_2v_3)=(1,1)$ and $g(v_4v_5)=(1,1)$. Furthermore, $t\ge 7$ if there is an edge in $ \left\lbrace v_1v_2,v_3v_4\right\rbrace $ weighted $\Omega$. 
\end{lemma}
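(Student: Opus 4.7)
The plan is to analyze each possible value of $t$ via weight identities derived from a flow modification. Let $Q=v_0v_1\cdots v_t$ be the subpath of $C$ from $v_0$ to $v_t$, and let $R$ be a path in $P_{a,a+b}$ connecting $v_0$ to $v_t$ (such $R$ exists since $v_0,v_t$ lie in the same component). Form the circuit $C'=Q\cup R$ and the modified flow $g'=g+f_{C',b}\in\mathcal{R}_C(f)$. By Lemma \ref{l1}, $\omega(E_{g'=c}(C))=\omega(E_{g=c}(C))=\omega(C)/4$ for every $c\in\mathbb{Z}_2\times\mathbb{Z}_2$. Combined with the alternation of $S$ (so that $g(v_{2k-1}v_{2k})=(0,0)$ inside $S$) and the constraint from Lemma \ref{l3} that the two $C$-edges incident to $v_t$ have $g$-sum $\neq b$, these four identities translate into linear equations among $\omega(v_0v_1),\omega(v_1v_2),\ldots,\omega(v_{t-1}v_t)$.

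For small $t$ the proof is local. The $c=a$ identity kills $t=1$ since the only edge of $Q$ has value $a$, giving $\omega(v_0v_1)=0$. The $c=(0,0)$ identity kills $t=2,3$ because $\omega(v_1v_2)\ge 1$ but $Q$ contains no $b$-edge (the only non-$(0,0)$ candidate is $v_2v_3$ when $t=3$, which cannot equal $b$ by Lemma \ref{l3}). For $t=4$, the $c=(0,0)$ identity forces $g(v_2v_3)=b$ with $\omega(v_2v_3)=\omega(v_1v_2)+\omega(v_3v_4)$; then because $v_2v_3$ is invisible to the $c=a+b$ identity when it equals $b$, that identity isolates $\omega(v_0v_1)=0$. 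For $t=5$ an analogous computation forces $g(v_2v_3)=b$ with $\omega(v_2v_3)=\omega(v_1v_2)+\omega(v_3v_4)$, together with $g(v_4v_5)=a+b$ and $\omega(v_0v_1)=\omega(v_4v_5)$.

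For $t=6$, the $c=(0,0)$ identity forces $\omega(v_1v_2)+\omega(v_3v_4)+\omega(v_5v_6)$ to equal a sum of $b$-weights from $\{v_2v_3,v_4v_5\}$, so at least one of the latter must be a $b$-edge; the $c=a+b$ identity eliminates the configuration in which both are $b$ together with $g(v_6v_7)=a$, and the remaining sub-configurations are ruled out by the two-circuit modification of Lemma \ref{l6} applied to carefully chosen subpaths of $v_0\cdots v_6$. For $t\le|E(S)|$: if $t\ge|E(S)|+1$ then $v_t$ lies at or beyond the endpoint of $S$, and the boundary edge $v_{|E(S)|+1}v_{|E(S)|+2}$ of nonzero value $c$ alters the alternation inside $Q$; a parallel identity-and-sign analysis yields a zero weight, a contradiction. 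For $t=7$, the $c=(0,0)$ identity forces at least one of $v_2v_3,v_4v_5$ to be $b$; the $c=a+b$ identity combined with $g(v_6v_7)\in\{a,a+b\}$ rules out some ``only one is $b$'' sub-configurations, and the rest are eliminated via Lemma \ref{l6}, forcing both $v_2v_3,v_4v_5$ to equal $(1,1)$.

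For the final statement, the $t=5$ analysis yields $\omega(v_2v_3)=\omega(v_1v_2)+\omega(v_3v_4)$. If either $v_1v_2$ or $v_3v_4$ has weight $\Omega$, then $\omega(v_2v_3)\ge\Omega+1$, contradicting the maximality of $\Omega$; combined with $t\ge 5$ and $t\ne 6$, this gives $t\ge 7$. I expect the main obstacle to be the $t=6$ case: the single-circuit modification produces only two useful independent equations while several sub-configurations of $(g(v_2v_3),g(v_4v_5),g(v_6v_7))$ satisfy them, and eliminating those survivors requires the two-circuit modification of Lemma \ref{l6} with a delicate sub-case analysis. A smaller version of the same obstacle appears in the ``only one is $b$'' sub-cases for $t=7$ and for $t>|E(S)|$.
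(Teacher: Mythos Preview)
Your framework---derive weight identities on $Q=v_0\cdots v_t$ from the modification $g'=g+f_{C',b}$, equivalently Lemma~\ref{l3}---is exactly what the paper uses, and your handling of $t\le 4$, of $t=5$, and of the final $\Omega$-statement is correct and matches the paper.

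There are, however, two genuine gaps.

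\textbf{The bound $t\le |E(S)|$.} Your plan to run the same weight identities on $Q=v_0\cdots v_t$ does not work once $t\ge |E(S)|+1$: the path $Q$ then extends past $S$ into edges whose $g$-values are completely unconstrained, so no ``zero weight'' is forced. The paper's argument is conceptually different and much simpler: one modifies along the \emph{complementary} arc $v_0v_{-1}\cdots v_t$ of $C$, which avoids $S$ entirely when $t\ge |E(S)|+1$. Adding $b=(1,1)$ along that circuit turns $v_0v_{-1}$ into a $(0,0)$-edge while leaving every edge of $S$ unchanged, so $v_{-1}v_0v_1\cdots v_{|E(S)|+1}$ is a strictly longer $E_{g'=(0,0)}$-alternating segment containing $E_{g=(0,0)}(S)$, contradicting the maximality of $S$.

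\textbf{The surviving sub-cases for $t=6$ and $t=7$.} Invoking Lemma~\ref{l6} ``on subpaths of $v_0\cdots v_6$'' cannot work as stated: Lemma~\ref{l6} needs \emph{two} paths in $P_{a,a+b}$ with endpoints in the range, and the only one you have is the original $v_0$--$v_t$ path. The paper's device is to locate a \emph{new} odd vertex of $P_{(1,0),(0,1)}$ inside the segment---for example $v_3$, in the sub-case $g(v_2v_3)=(0,1)$---and to let $v_{t_0}$ be its partner (which need not lie in $\{v_0,\ldots,v_t\}$). A fresh application of Lemma~\ref{l3} from $v_3$ to $v_{t_0}$ supplies an additional independent weight identity, and a short case analysis on whether $t_0$ falls inside or outside $\{0,\ldots,t\}$ then kills each surviving configuration. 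You correctly anticipated that this is the delicate step; the missing idea is that the second modification must start from a \emph{different} odd vertex, not from a subpath of the first.
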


\begin{proof}
	
	Let $P_1=v_0v_1v_2v_3$,~$P_2=v_3v_4v_5v_6$.~By Lemma \ref{l3},~$g(v_1v_2)=g(v_3v_4)=(0,0)$,~$g(v_0v_1)=a$,~we have $t\ge 5$ and $t\ge 7$ if there is an edge in $ \left\lbrace v_1v_2,v_3v_4\right\rbrace $ weighted $\Omega$. If $ t\ge|E(S)|+1$,~then there is a circuit $ C_1$~consist of a path $P_1$ from $v_1$ to $v_{t}$ and  $v_0v_{-1}\cdots v_{t}$.~Let $g'=g+f_{C_1,(b)}$, then $g'\in \mathcal{R}_C(f)$ and $ g'(v_0v_{-1})=g'(v_0v_1)=(0,0)$,~a contradiction as Lemma \ref{l2}.
	Let's prove that $t\ne 6$. Assume that $t=6$, we have $g(v_2v_3)=(0,1)$ or $g(v_4v_5)=(0,1)$. If $g(v_2v_3)=(0,1)$, then $g(v_4v_5)=(1,1)$. Consider $P_{(0,1),(1,0)}$, $v_{3}$ be a vertex with odd degree on $ P_{(0,1),(1,0)}$, let $v_{t_0}$ be another vertex with odd degree in the same component of $ P_{(0,1),(1,0)}$. ~If $v_{t_0}\notin \left\lbrace v_0,v_1,\dots,v_6\right\rbrace $, then $\omega(E_{g=(0,0)}(P_1))=\omega(E_{g=(1,1)}(P_1))$, which is a contradiction as $g(v_0v_1)=(1,0)$, $g(v_1v_2)=(0,0)$ and $g(v_2v_3)= (0,1)$. Similarily, we have $t_0\notin \left\lbrace 0,1\right\rbrace $, thus $g(v_2v_3)\ne (0,1).$ Similarily, we can prove that $g(v_4v_5)\ne (0,1)$. This is a contradiction.
	
	Finally, if $t=7$, let's prove that $g(v_2v_3)=(1,1)$ and $g(v_4v_5)=(1,1)$. Since $t=7$ and Lemma \ref{l3}, $g(v_6v_7)=(0,1)$ or $(1,0)$. If $g(v_2v_3)=(0,1)$ or $(1,0)$, then $g(v_4v_5)=(1,1)$. Consider $P_{(0,1),(1,0)}$, $v_{3}$ be a vertex with odd degree on $ P_{(0,1),(1,0)}$, let $v_{t^*}$ be another vertex with odd degree in the same component of $ P_{(0,1),(1,0)}$. If $v_{t^*}\notin \left\lbrace v_0,v_1,\dots,v_7\right\rbrace $, then $\omega(E_{g=(0,0)}(P_1))=\omega(E_{g=(1,1)}(P_1))$, which is a contradiction as $g(v_0v_1)=(1,0)$, $g(v_1v_2)=(0,0)$ and $g(v_2v_3)=(1,0)$ or $(0,1)$. Similarily, we have $t^*\notin \left\lbrace 0,1,7 \right\rbrace $, thus $t^*=6$. Since $\omega(v_4v_5)=\omega(v_1v_2)+\omega(v_3v_4)+\omega(v_5v_6)$, $\omega(v_4v_5)\ne \omega(v_3v_4)+\omega(v_5v_6)$, which is a contradiction as $\omega(E_{g=(0,0)}(P_2))=\omega(E_{g=(1,1)}(P_2))$. Thus, $g(v_2v_3)=(1,1)$. Similarily, we can prove that $g(v_4v_5)=(1,1)$.

\end{proof}

\begin{Remark}\label{re1}
		Let $g\in \mathcal{R}_C(f)$,~$S$~be a maximal $E_{g=(0,0)}$-alternating segment of $C$,~then by Lemma $\ref{l5}$,~$t''\le |E(S)|$ if $t''$ is defined.
\end{Remark}

Now,~adding a condiction,~we suppose that $S$ is a maximal $E_{g=(0,0)}$-alternating segment in $C$ such that there is an edge $e\in E(S)$ which is weighted $\Omega$ and $g(e)=(0,0)$.

\begin{lemma}\label{l41}
	$|E(S)|\ge 9$ and $\Omega\le4$.
\end{lemma}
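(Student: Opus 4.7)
The plan is to prove $|E(S)| \ge 9$ by contradiction, from which $\Omega \le 4$ follows by a short weight count. The case analysis uses Lemma \ref{l5} and its mirror applied at the right end of $S$, together with the weight equations of Lemma \ref{l3}.

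For the reduction $|E(S)|\ge 9 \Rightarrow \Omega\le 4$: since $g(v_1v_2)=(0,0)$ (from the proof of Lemma \ref{l5}) and $S$ is alternating, the $(0,0)$-colored edges of $S$ are precisely $v_{2i-1}v_{2i}$ for $i=1,\dots,\lceil|E(S)|/2\rceil$. When $|E(S)|\ge 9$ there are at least five such edges; as $e$ is one of them and each of the others has weight at least $1$, we get $\omega(E_{g=(0,0)}(S))\ge \Omega+4$. Lemma \ref{l1} then gives $\Omega+4\le \omega(E_{g=(0,0)}(C))=\omega(C)/4\le 8$, so $\Omega\le 4$.

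To prove $|E(S)|\ge 9$, suppose for contradiction $|E(S)|\in\{5,6,7,8\}$ (the lower bound $|E(S)|\ge 5$ comes from Lemma \ref{l5}). When $|E(S)|\le 6$, the $(0,0)$-edges of $S$ are $v_1v_2,v_3v_4,v_5v_6$; the furthermore clause of Lemma \ref{l5} forces $t\ge 7 > |E(S)|$ whenever $e\in\{v_1v_2,v_3v_4\}$, contradicting $t\le |E(S)|$, and the mirror statement at the right end of $S$ rules out $e\in\{v_3v_4,v_5v_6\}$ analogously. For $|E(S)|\in\{7,8\}$, after reversing $S$ if necessary we may assume $e\in\{v_1v_2,v_3v_4\}$; Lemma \ref{l5} then forces $t\ge 7$, and in the principal sub-case $t=7$ it also gives $g(v_2v_3)=g(v_4v_5)=(1,1)$. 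Applying Lemma \ref{l3} to $Q=v_0v_1\cdots v_7$ and using $g(v_7v_8)=(0,0)$, the condition $g(v_6v_7)+g(v_7v_8)\ne (1,1)$ together with $\omega(E_{g=(1,0)}(Q))=\omega(E_{g=(0,1)}(Q))$ pins down $g(v_6v_7)=(0,1)$ and $\omega(v_0v_1)=\omega(v_6v_7)$, while $\omega(E_{g=(0,0)}(Q))=\omega(E_{g=(1,1)}(Q))$ yields
\[
\omega(v_1v_2)+\omega(v_3v_4)+\omega(v_5v_6)=\omega(v_2v_3)+\omega(v_4v_5).
\]

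The contradiction is completed by the mirror of Lemma \ref{l5} at the right end of $S$: the analog $\bar t$ of $t$ satisfies $|E(S)|+1-\bar t\in\{5,7\}$ (also $\{8\}$ when $|E(S)|=8$), and the value $7$ would, via the mirror of the ``$t=7$'' clause, force $g(v_{|E(S)|-1}v_{|E(S)|})=g(v_{|E(S)|-3}v_{|E(S)|-2})=(1,1)$, which conflicts with either $g(v_6v_7)=(0,1)$ (case $|E(S)|=7$) or the $(0,0)$-coloring of $v_5v_6$ and $v_7v_8$ (case $|E(S)|=8$). In each remaining sub-case a mirror application of Lemma \ref{l3} on the sub-segment from the right end produces a second weight identity; combining this with the identity above and the bound $\omega(E_{g=(0,0)}(C))\le 8$ from Lemma \ref{l1} forces either a weight exceeding $\Omega$ or a zero weight, a contradiction. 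The main obstacle is this final reconciliation and the exhaustive treatment of sub-cases (notably $|E(S)|=8$ with $t=8$): no single structural conclusion is contradictory on its own, and the numerical impossibility emerges only after combining the mirrored weight identities with the global bound $\omega(C)\le 32$.
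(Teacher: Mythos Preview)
Your derivation of $\Omega\le 4$ from $|E(S)|\ge 9$ is correct and matches the paper. Your reduction of $|E(S)|\ge 9$ to the single case $|E(S)|=7$, $t=7$ is also essentially right (the even lengths you treat are in fact superfluous, since by convention $S$ begins and ends with a $(0,0)$-edge), and your computation of $g(v_6v_7)=(0,1)$ together with the identity
\[
\omega(v_1v_2)+\omega(v_3v_4)+\omega(v_5v_6)=\omega(v_2v_3)+\omega(v_4v_5)
\]
is correct.

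The genuine gap is the last step. Your appeal to a ``mirror of Lemma~\ref{l5}'' is not sound as written: that lemma is stated after normalizing $a=(1,0)$, $b=(1,1)$ via Lemma~\ref{lem1}, so its literal mirror at the right end would conclude $g(v_6v_7)=d$ and $g(v_4v_5)=d$ with $d=g(v_{|E(S)|+2}v_{|E(S)|+3})$ unknown, not $g(\cdot)=(1,1)$. More seriously, the closing sentence (``a mirror application of Lemma~\ref{l3}\dots\ forces either a weight exceeding $\Omega$ or a zero weight'') is an outline, not a proof; you yourself flag this reconciliation as ``the main obstacle'', and for the surviving sub-case $|E(S)|=7$, $t=7$, $\bar t=3$ you never exhibit the contradictory inequality. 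Since you do not know $c,d$, the mirror weight identity carries an undetermined shift, and it is not clear the two identities combine to anything impossible.

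The paper avoids this entirely by using the indices $t'$ and $t''$ set up just before Lemma~\ref{l1}, rather than $\bar t$. Once $t=7$ and $g(v_2v_3)=g(v_4v_5)=(1,1)$, $g(v_6v_7)=(0,1)$ are known, one checks via Lemma~\ref{l3} (equivalently Lemma~\ref{l1}) that on any forward path $v_1\cdots v_{t'}$ with $t'\le 7$ one would need $\omega(E_{g=(0,0)})=\omega(E_{g=(0,1)})$; but the left side is at least $\Omega+2$ (it contains $e$) while the right side is at most $\omega(v_6v_7)\le\Omega$. Hence $t'\ge 8=|E(S)|+1$, so $t''$ is defined; the same count in the modified flow $g'$ gives $t''\ge 8$, contradicting Remark~\ref{re1} ($t''\le|E(S)|=7$). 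This two-step switching argument is the missing idea: it produces a contradiction using only the left end of $S$ and the hypothesis $\omega(e)=\Omega$, with no need for a mirror analysis at the right end.
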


\begin{proof}
	Suppose to be contrary that $|E(S)|<9$.~By symmetry,~assume there is an edge in $ \left\lbrace v_1v_2,v_3v_4\right\rbrace $ weighted $\Omega$.~By Lemma $\ref{l5}$,~$|E(S)|=7$ and $t=7$.~By Lemma \ref{l3} $g(v_3v_4)=g(v_5v_6)=(1,1)$,~$g(v_7v_8)=(0,1)$ and~$ t'\ge 8$.~Futhermore, $t''\ge 8$ by Lemma \ref{l3},~which is a contradiction as Lemma \ref{l5}. Since\begin{equation}
		\begin{array}{lllllllllll}
			35\ge\omega(C)&= 4|E_{g=(0,0)}\cap E(C)|\\
			&\ge 4(\Omega+4)\\
		\end{array}
		\nonumber
	\end{equation}
	
	Thus,~$\Omega\le4$.
	
\end{proof}

By symmetry,~assume there is an edge in $ \left\lbrace v_1v_2,v_3v_4,v_5v_6\right\rbrace $ weighted $\Omega$ if $9\le|E(S)|\le 11$.~If $|E(S)|=9$,~let the number of edges in $\left\lbrace v_1v_2,v_3v_4\right\rbrace$ as small as possible.

\begin{lemma}\label{l42}
	If~ $\Omega=4$, then $S$ satisfy the following two conditions:
	
	(a) $\omega(v_{5}v_{6})=4$ and $\omega(e)=1$ for any $e\in E_{g=(0,0)}(S)\backslash\left\lbrace v_{5}v_{6}\right\rbrace $;
	
	(b) $\omega(v_{2}v_{3})=\omega(v_{8}v_{9})=2$.
\end{lemma}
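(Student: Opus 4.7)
The plan is to first extract a sharp counting constraint from Lemma~\ref{l1}, then do a short case analysis to locate the weight-$4$ zero-edge inside $S$, and finally read off the two prescribed weights by applying Lemma~\ref{l6} twice.

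\emph{Step 1 (global counting).} By Lemma~\ref{l1}, $\omega(E_{g=(0,0)}(C))=\omega(C)/4\le 8$. Since $\Omega=4$ and all weights are positive integers, the zero-edges of $C$ other than the prescribed weight-$4$ one contribute at most $4$ in total, so there are at most five zero-edges in $C$. But $S$ is $E_{g=(0,0)}$-alternating starting at $v_1v_2\in E_{g=(0,0)}$, so it contains $\lceil|E(S)|/2\rceil\ge 5$ zero-edges by Lemma~\ref{l41}. Matching these two bounds forces $\omega(C)=32$, $|E(S)|\in\{9,10\}$, every zero-edge of $C$ lying in $S$, exactly one zero-edge of weight $4$, and the remaining four of weight $1$.

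\emph{Step 2 (locating the weight-$4$ edge).} By the symmetry reduction made immediately before the lemma, the weight-$4$ edge lies in $\{v_1v_2,v_3v_4,v_5v_6\}$, with its presence in $\{v_1v_2,v_3v_4\}$ minimised when $|E(S)|=9$. I would rule out the first two positions. Suppose first that $\omega(v_1v_2)=4$. By the last clause of Lemma~\ref{l5}, $t\ge 7$, and applying Lemma~\ref{l3} to the subpath $Q$ of $C$ from $v_0$ to $v_t$ gives $\omega(E_{g=c}(Q))=\omega(E_{g=c+b}(Q))$ for every $c$. The left side with $c=(0,0)$ already contains $v_1v_2$ of weight $4$ plus further weight-$1$ zero-edges, while the right side is capped by the inventory from Step~1; the resulting inequality is strict, a contradiction. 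A parallel argument, combined with the minimality clause preventing the weight-$4$ edge from being relocated to $v_5v_6$, eliminates $\omega(v_3v_4)=4$. Hence $\omega(v_5v_6)=4$, and together with Step~1 this establishes part~(a).

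\emph{Step 3 (weights of $v_2v_3$ and $v_8v_9$).} With (a) in hand, I would apply Lemma~\ref{l6} to suitable pairs of subpaths of $C$ whose endpoints are coupled through a common component of $P_{a,a+b}$, tracked via $v_t$, $v_{t'}$, $v_{t''}$ and bounded by Remark~\ref{re1}. Each such application yields an equality $\omega(E_{g=c}(\cdot))=\omega(E_{g=c+b}(\cdot))$, which translates into a linear relation among $\omega(v_2v_3)$, $\omega(v_4v_5)$, $\omega(v_6v_7)$, $\omega(v_8v_9)$. Combining two such relations with the total weight $\omega(C)=32$ and the zero-edge inventory isolates $\omega(v_2v_3)=\omega(v_8v_9)=2$, giving~(b).

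The main obstacle is Step~2. The counting in Step~1 is essentially one line, and Step~3 is a linear extraction once the zero-edge structure is fixed, but to rule out the weight-$4$ edge being at $v_1v_2$ or $v_3v_4$ I must track $v_t$, $v_{t'}$, $v_{t''}$ simultaneously against the constraints of Lemmas~\ref{l3} and~\ref{l5}, while ensuring that the normalisations permitted by Lemma~\ref{lem1} do not reintroduce a zero-edge outside $S$ and thereby violate the inventory of Step~1.
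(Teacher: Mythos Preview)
Your Step~1 is fine (and with the paper's convention that a maximal alternating segment begins and ends in $E_{g=(0,0)}$, it actually pins down $|E(S)|=9$). The real gap is in Step~2: the inventory bound does not produce the contradiction you claim. If $\omega(v_1v_2)=4$ and $t=7$, Lemma~\ref{l5} gives $g(v_2v_3)=g(v_4v_5)=(1,1)$, so the balance from Lemma~\ref{l3} on $v_0\cdots v_7$ reads $4+1+1=\omega(v_2v_3)+\omega(v_4v_5)$, which is perfectly compatible with $\Omega=4$ (take weights $2,4$ or $3,3$). The ``inventory from Step~1'' constrains the \emph{zero}-edges, not the $(1,1)$-edges, so it does not cap the right-hand side as you assert. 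Your appeal to the minimality clause is also backwards: that clause merely selects among symmetric labellings of $S$; it does not by itself exclude $\omega(v_3v_4)=4$.

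The paper's argument runs in the opposite order: it first proves $t=5$, \emph{independently} of where the weight-$4$ edge sits. Namely, if $t\ge 7$ then (by the argument in the proof of Lemma~\ref{l5}) $g(v_2v_3)=(1,1)$, which forces $t'\ge 10$ and hence $t''\ge 10$, contradicting Remark~\ref{re1} since $|E(S)|=9$. Once $t=5$ is known, a single application of Lemma~\ref{l3} to $v_0\cdots v_5$ yields $g(v_2v_3)=(1,1)$, $g(v_4v_5)=(0,1)$, and $\omega(v_1v_2)+\omega(v_3v_4)=\omega(v_2v_3)\le\Omega=4$; so neither $v_1v_2$ nor $v_3v_4$ can carry the weight~$4$, the standing symmetry assumption then places the weight-$4$ edge at $v_5v_6$, the remaining four zero-edges have weight~$1$, and $\omega(v_2v_3)=2$. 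The equality $\omega(v_8v_9)=2$ follows symmetrically via $\bar t$ from the other end of $S$. Thus (a) and (b) drop out together from $t=5$; there is no separate Step~3, and Lemma~\ref{l6} is not invoked at all.
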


\begin{proof}
	Combining $\Omega=4$ with Lemma \ref{l41}, $\left|E(S)\right|= 9$ and $\omega(C)=32$,~there are four edges weighted $1$ in $E_{g=(0,0)}(S)$ and there is one edge  weighted $4$ in $E_{g=(0,0)}(S)$. By Lemma \ref{l5}, $7\leq t\leq \left|E(S)\right|=9$ or $t=5$.
	
	If $t\ge7$, then $g(v_2v_3)=(1,1)$  by the same proof of Lemma \ref{l5},~then $t'\ge 10$,~then $t''\ge 10$ a contradiction as Remark \ref*{re1}.
	
	So $t=5$. Combining with Lemma \ref{l3}, $\omega(v_{5}v_{6})=4$, $g(v_{2}v_{3})=(1,1)$, $\omega(v_{2}v_{3})=2$, $g(v_{4}v_{5})=(0,1)$ and $\omega(v_{4}v_{5})=\omega(v_{0}v_{1})$.
	
	By the definition of $\bar{t}$,~similarly,~we can obtain $\bar{t}=v_{6}$, $\omega(v_{8}v_{9})=2$ and $\omega(v_{6}v_{7})=\omega(v_{10}v_{11})$.
\end{proof}

\begin{lemma}\label{l43}
	If~ $\Omega=3$, then $S$ satisfy the following two conditions:

	(a) $\omega(v_{5}v_{6})=3$;
	
	(b) $\omega(v_{2}v_{3})=2$ and $\omega(v_{1}v_{2})=\omega(v_{3}v_{4})=1$.

\end{lemma}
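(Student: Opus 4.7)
The plan is to mirror the analysis of Lemma~\ref{l42}, using the constraints from Lemmas~\ref{l1}, \ref{l2}, \ref{l3}, \ref{l41}, and~\ref{l5} to pin down the structure of $S$. Since $\Omega=3$, Lemma~\ref{l41} gives $|E(S)|\ge 9$, and because $E_{g=(0,0)}(S)$ is a matching of $(|E(S)|+1)/2$ edges (by Lemma~\ref{l2} together with the alternating structure of $S$), exactly one of weight $3$ and the rest of weight at least $1$, we have $\omega(E_{g=(0,0)}(S))\ge (|E(S)|+1)/2+2$. Combined with Lemma~\ref{l1} and $\omega(C)\le 32$, which give $\omega(E_{g=(0,0)}(C))\le 8$, this forces $|E(S)|\in\{9,11\}$.

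Next I rule out $|E(S)|=11$. In this subcase equality forces $\omega(C)=32$, the six edges of $E_{g=(0,0)}(S)$ have weights $\{3,1,1,1,1,1\}$, and no $(0,0)$-edges lie outside $S$. By the symmetry convention preceding Lemma~\ref{l42}, the heavy edge is in $\{v_1v_2,v_3v_4,v_5v_6\}$. Lemma~\ref{l5} forces $t\ge 7$ whenever the heavy edge lies in $\{v_1v_2,v_3v_4\}$; tracking $v_{t'}$ in $P_{a,b}$ and $v_{t''}$ in $P_{b,a+b}$ after the flow modification $g'=g+f_{C_1,a+b}$ defined in the preamble yields $t',t''\ge 12$, contradicting Remark~\ref{re1}. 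The subcase where the heavy edge is $v_5v_6$ is ruled out analogously via the $\bar t$-analysis and Lemma~\ref{l6}.

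For $|E(S)|=9$, I must show the heavy edge is $v_5v_6$ and then derive the remaining weights. If $\omega(v_1v_2)=3$, Lemma~\ref{l5} forces $t\ge 7$, and the same argument that $t=7$ implies $g(v_2v_3)=(1,1)$ pushes $t'\ge 10$ and hence $t''\ge 10$, contradicting Remark~\ref{re1}; the case $\omega(v_3v_4)=3$ is symmetric, using Lemma~\ref{l3} to balance weights across $v_0\cdots v_t$. Hence by the minimality convention (``the number of edges in $\{v_1v_2,v_3v_4\}$ is as small as possible'') we must have $\omega(v_5v_6)=3$. Lemma~\ref{l5} then gives $t=5$, and Lemma~\ref{l3} applied to $Q=v_0v_1\cdots v_5$ with $c=(0,0)$ and $b=(1,1)$ forces $\omega(v_1v_2)+\omega(v_3v_4)=\omega(E_{g=(1,1)}(Q))$; together with Lemma~\ref{l1} applied to $E_{g=(1,1)}(C)$ and the observation that the only candidates for $(1,1)$-edges in $Q$ are $v_2v_3$ and $v_4v_5$, this pins down $\omega(v_2v_3)=2$ and $\omega(v_1v_2)=\omega(v_3v_4)=1$.

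The main obstacle will be the $|E(S)|=11$ case, which does not arise in Lemma~\ref{l42} since $\Omega=4$ already forces $|E(S)|=9$, together with the careful bookkeeping of the odd-degree vertices $t,t',t''$ under the flow modifications $g+f_{C_1,a+b}$; in particular one must verify at each step that the circuit $C_1$ used to modify $g$ is well-defined and that the modified flow remains in $\mathcal{R}_C(f)$, so that Remark~\ref{re1} can indeed be invoked to close the contradictions.
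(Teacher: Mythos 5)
Your reduction to $|E(S)|\in\{9,11\}$ and your treatment of the $|E(S)|=9$ case follow the paper's route (rule out $t\ge 7$ via the $t'$, $t''$ chain and Remark~\ref{re1}, conclude $t=5$, then read off the weights from the balance in Lemma~\ref{l3}). The genuine problem is your claim that $|E(S)|=11$ can be \emph{ruled out}. The left-end analysis ($t\ge 7$ forced by Lemma~\ref{l5}, then $t',t''\ge 12$) only eliminates the possibility that the edge of weight $\Omega=3$ is $v_1v_2$ or $v_3v_4$, and the symmetric $\bar t$-analysis at the other end only eliminates $v_9v_{10}$ and $v_{11}v_{12}$; neither touches the middle edges $v_5v_6$ and $v_7v_8$. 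The configuration with $|E(S)|=11$, $\omega(v_5v_6)=3$, $t=5$ and weight multiset $\{3,1,1,1,1,1\}$ on $E_{g=(0,0)}(S)$ satisfies every balance condition available from Lemmas~\ref{l1}--\ref{l6}, so no contradiction is derivable there --- indeed the paper does not claim one: its proof of this lemma concludes in the $|E(S)|=11$ case that $t=5$ and that the asserted weights hold, exactly as in the $|E(S)|=9$ case. By dismissing $|E(S)|=11$ as vacuous you never actually verify conclusions (a) and (b) in that case, so your proof is incomplete precisely where the extra work beyond Lemma~\ref{l42} is required.

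A secondary point: in the $|E(S)|=9$ case the identity $\omega(v_1v_2)+\omega(v_3v_4)=\omega(v_2v_3)$ does not by itself give $\omega(v_1v_2)=\omega(v_3v_4)=1$, because when $\omega(C)=32$ the multiset of weights on $E_{g=(0,0)}(S)$ may be $\{3,2,1,1,1\}$ with the weight-$2$ edge adjacent to the left end. The paper disposes of this by running the symmetric $\bar t$-analysis at the right end and invoking the stated convention that the number of such edges in $\{v_1v_2,v_3v_4\}$ is minimized; you cite that convention only to locate the weight-$\Omega$ edge, where it is not the relevant tool (the symmetry convention placing the heavy edge in $\{v_1v_2,v_3v_4,v_5v_6\}$ already does that job once $v_1v_2$ and $v_3v_4$ are excluded).
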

\begin{proof}
	Combining $\Omega=3$ with Lemma \ref{l41}, $\left|E(S)\right|= 9$ or $|E(S)|=11$.~By Lemma \ref{l5}, $7\leq t\leq \left|E(S)\right|$ or $t=5$.
	
	For $|E(S)|=9$,~if $t\ge7$, then $g(v_2v_3)=(1,1)$. By the same proof of Lemma \ref{l5},~then $t'\ge 10$,~then $t''\ge 10$,~which is a contradiction as Lemma \ref{l5}.~Thus $t=5$ and there are at least three edges weighted $1$ in $E_{g=(0,0)}(S)$ by Lemma $\ref{l1}$. Therefore,~ $\omega(v_{2}v_{3})=2$ and $\omega(v_{1}v_{2})=\omega(v_{3}v_{4})=1$; or $\omega(v_{8}v_{9})=2$ and $\omega(v_{7}v_{8})=\omega(v_{9}v_{10})=1$;~by our assumption,~$\omega(v_{2}v_{3})=2$ and $\omega(v_{1}v_{2})=\omega(v_{3}v_{4})=1$.
	
	For $|E(S)|=11$, if $t\ge7$, then $g(v_2v_3)=(1,1)$ by the same proof of Lemma \ref{l5},~then $t''\ge 12$,~a contradiction as Lemma \ref{l5}. Thus $t=5$. Therefore,~ $\omega(v_{2}v_{3})=2$ and $\omega(v_{1}v_{2})=\omega(v_{3}v_{4})=1$.
\end{proof}

Let $S'=v_{1}'v_{2}'\dots v_{\left|E(S')\right|+1}'$ be a maximal $E_{g'=(0,0)}(G)$-alternating containing $v_{2}v_{3}$ and $g'(v_{2}v_{3})=(0,0)$ and $\omega(v_{2}v_{3})=2$ on $C$, $v_{2}v_{3}$ is shown in Figure \ref*{fig1} and \ref*{fig6}. 
Futhermore, if $\Omega=2$, let $S'=S$.
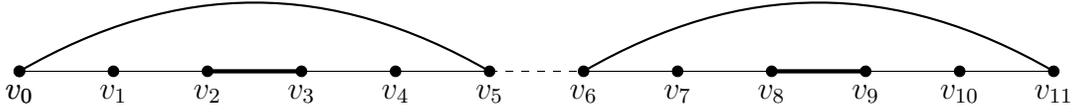
\begin{figure*}
	\begin{tikzpicture}
		\draw (0,0)--(6.25,0) ;
		\draw (7.5,0)--(13.75,0) ;
		\draw [color=white, fill=white,](0,-0.3) node [black]{$v_0$};
		\draw [color=white, fill=white,](1.25,-0.3) node [black]{$v_1$};
		\draw [color=white, fill=white,](2.5,-0.3) node [black]{$v_2$};
		\draw [color=white, fill=white,](3.75,-0.3) node [black]{$v_3$};
		\draw [color=white, fill=white,](5,-0.3) node [black]{$v_4$};
		\draw [color=white, fill=white,](6.25,-0.3) node [black]{$v_5$};
		\draw [color=white, fill=white,](7.5,-0.3) node [black]{$v_6$};
		\draw [color=white, fill=white,](8.75,-0.3) node [black]{$v_7$};
		\draw [color=white, fill=white,](10,-0.3) node [black]{$v_8$};
		\draw [color=white, fill=white,](11.25,-0.3) node [black]{$v_9$};
		\draw [color=white, fill=white,](12.5,-0.3) node [black]{$v_{10}$};
		\draw [color=white, fill=white,](0,-0.3) node [black]{$v_0$};
		\draw [color=white, fill=white,](13.75,-0.3) node [black]{$v_{11}$};
		\filldraw[color=black, fill=black,] (0,0) circle (2pt);
		\filldraw[color=black, fill=black,] (0,0) circle (2pt);
		\filldraw[color=black, fill=black,] (1.25,0) circle (2pt);
		\filldraw[color=black, fill=black,] (2.5,0) circle (2pt);
		\filldraw[color=black, fill=black,] (3.75,0) circle (2pt);
		\filldraw[color=black, fill=black,] (5,0) circle (2pt);
		\filldraw[color=black, fill=black,] (6.25,0) circle (2pt);
		\filldraw[color=black, fill=black,] (7.5,0) circle (2pt);
		\filldraw[color=black, fill=black,] (8.75,0) circle (2pt);
		\filldraw[color=black, fill=black,] (10,0) circle (2pt);
		\filldraw[color=black, fill=black,] (11.25,0) circle (2pt);
		\filldraw[color=black, fill=black,] (12.5,0) circle (2pt);
		\filldraw[color=black, fill=black,] (13.75,0) circle (2pt);
		\draw[-,thick] (0,0) to [in = 150, out =30](6.25,0);
		\draw[-,thick] (7.5,0) to [in = 150, out =30](13.75,0);
		\draw[ultra thick] (2.5,0)--(3.75,0);
		\draw[ultra thick] (10,0)--(11.25,0);
		\draw[dashed] (6.25,0)--(7.5,0);
	\end{tikzpicture}~~~~~~~~~~~~~~~~~~~~~~~~~~~~~
	\caption{$\omega(v_5v_6)=4$,~$\omega_(v_2v_3)=\omega_(v_8v_9)=2$,~$\omega_(v_1v_2)=\omega_(v_3v_4)=1$~and~\\$\text{~~~~~~~~~~~~~}\omega_(v_7v_8)=\omega_(v_9v_{10})=1$}
	\label{fig1}
\end{figure*}

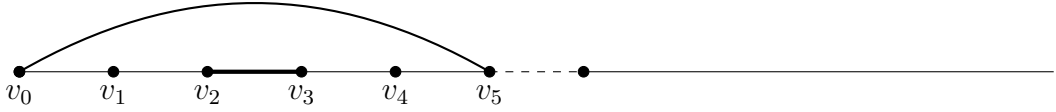
\begin{figure*}
	\begin{tikzpicture}
		\draw (0,0)--(6.25,0) ;
		\draw (7.5,0)--(13.75,0) ;
		\draw [color=white, fill=white,](0,-0.3) node [black]{$v_0$};
		\draw [color=white, fill=white,](1.25,-0.3) node [black]{$v_1$};
		\draw [color=white, fill=white,](2.5,-0.3) node [black]{$v_2$};
		\draw [color=white, fill=white,](3.75,-0.3) node [black]{$v_3$};
		\draw [color=white, fill=white,](5,-0.3) node [black]{$v_4$};
		\draw [color=white, fill=white,](6.25,-0.3) node [black]{$v_5$};
		\filldraw[color=black, fill=black,] (0,0) circle (2pt);
		\filldraw[color=black, fill=black,] (0,0) circle (2pt);
		\filldraw[color=black, fill=black,] (1.25,0) circle (2pt);
		\filldraw[color=black, fill=black,] (2.5,0) circle (2pt);
		\filldraw[color=black, fill=black,] (3.75,0) circle (2pt);
		\filldraw[color=black, fill=black,] (5,0) circle (2pt);
		\filldraw[color=black, fill=black,] (6.25,0) circle (2pt);
		\filldraw[color=black, fill=black,] (7.5,0) circle (2pt);
		\draw[-,thick] (0,0) to [in = 150, out =30](6.25,0);
		\draw[ultra thick] (2.5,0)--(3.75,0);
		\draw[dashed] (6.25,0)--(7.5,0);
	\end{tikzpicture}~~~~~~~~~~~~~~~~~~~~~~~~~~~~~
	\caption{$\omega(v_5v_6)=3$,~$\omega_(v_2v_3)=2$,~$\omega_(v_1v_2)=\omega_(v_3v_4)=1$}
	\label{fig6}
\end{figure*}

\begin{lemma}\label{l44}
	$|E(S')|\le |E(C)|-2$.

\end{lemma}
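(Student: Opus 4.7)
The plan is to apply Lemma \ref{l4} to the pair $(g', S')$ and read off $|E(S')| \leq |E(C)| - 2$ from one of its two clauses. By Lemma \ref{l41}, $\Omega \in \{2, 3, 4\}$, and the three values require separate handling. In the easy cases, $\Omega = 2$ gives $S' = S$ by definition, and the running assumption that $S$ contains an edge $e$ with $g(e) = (0,0)$ and $\omega(e) = \Omega$ makes the first clause of Lemma \ref{l4} applied to $(g, S)$ give the bound immediately. For $\Omega = 3$, the edge $v_2v_3 \in E(S')$ satisfies $g'(v_2v_3) = (0,0)$ and $\omega(v_2v_3) = 2$ by Lemma \ref{l43}, so the second clause of Lemma \ref{l4} applied to $(g', S')$ gives the bound.

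The substantive case is $\Omega = 4$: here $\omega(v_2v_3) = 2 < \Omega$, so neither clause of Lemma \ref{l4} fires from $v_2v_3$ alone. I would argue by contradiction, assuming $|E(S')| = |E(C)| - 1$. By Lemma \ref{l1}, $\omega(E_{g' = (0,0)}(C)) = \omega(C)/4 = 8$, and by Lemma \ref{l2}, $E_{g' = (0,0)} \cap E(S')$ is a matching alternating with the other edges of $S'$. Combining these with the per-edge bound $\omega(e) \leq \Omega = 4$ and $\omega(C) \leq 32$, I would either locate a weight-$4$ edge in $E_{g' = (0,0)}(S')$, so that Lemma \ref{l4}(a) fires, or reproduce directly the count-and-weight inequality used inside the proof of Lemma \ref{l4}, yielding an arithmetic contradiction analogous to the $-6 < 0$ and $-3 < 0$ bounds there.

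The main obstacle is precisely this $\Omega = 4$ case. Note that $g'$ is not forced to equal $g + f_C$: since $g'$ and $g$ need only agree on the support outside $C$, the difference $g' - g$ may be supported on a cycle $D$ of $G$ that passes through $v_2v_3$ and leaves $C$. Consequently $S'$ could a priori be much longer than what one gets from the naive choice $g' = g + f_C$, and controlling its length requires the maximality of $S$ together with the structural lemmas (Lemmas \ref{l42}, \ref{l5} and Remark \ref{re1}) to pin down the behaviour of $g'$ along $S'$ and ensure that the count-and-weight argument of Lemma \ref{l4} still goes through.
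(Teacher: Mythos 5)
Your reduction to the case $\Omega=4$ is exactly the paper's: for $\Omega=2$ one has $S'=S$ and clause (a) of Lemma \ref{l4} applies, and for $\Omega=3$ the weight-$2$ edge $v_2v_3\in E_{g'=(0,0)}(S')$ triggers clause (b). The paper indeed disposes of these two cases in one line (``By Lemma \ref{l4}, $\Omega=4$''). But in the case $\Omega=4$, which you yourself flag as the crux, your proposal does not close. In the branch where no weight-$4$ edge lies in $E_{g'=(0,0)}(S')$, the ``count-and-weight inequality used inside the proof of Lemma \ref{l4}'' gives only
\[
\omega(C)\;\le\;\omega\bigl(E_{g'=(0,0)}(C)\bigr)+\Omega\cdot\bigl|E(C)-E_{g'=(0,0)}\bigr|\;\le\;8+4\cdot 7\;=\;36,
\]
which is consistent with $\omega(C)=32$; there is no analogue of the $-6<0$ or $-3<0$ contradictions here, precisely because the $(0,0)$-class of $S'$ is only guaranteed to contain a weight-$2$ edge rather than a weight-$\Omega$ edge. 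So the dichotomy you propose leaves this branch unresolved, and your closing paragraph acknowledges the obstacle without supplying the missing input.

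What the paper actually uses to finish $\Omega=4$ is the structural information of Lemma \ref{l42} about the original segment $S$. First, $\omega(C)=32$ and $\omega(E_{g'=(0,0)}(C))=8$ together with the weight-$2$ edge $v_2v_3\in E_{g'=(0,0)}$ force $|E_{g'=(0,0)}(C)|\le 7$, hence $|E(S')|\le 13$ and (under the assumption $|E(S')|=|E(C)|-1$) at most $7$ edges of $C$ lie outside $E_{g'=(0,0)}$. Second, and this is the step your sketch is missing, Lemma \ref{l42}(b) gives $\omega(v_1v_2)=\omega(v_3v_4)=1$, and since $E_{g'=(0,0)}$ is a matching containing $v_2v_3$, both of these weight-$1$ edges are forced into $E(C)-E_{g'=(0,0)}$; the paper also notes $g'(v_5v_6)\ne(0,0)$. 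The refined count is then $8+\Omega+1+1+\Omega(7-1-2)=30<32=\omega(C)$, the $-2<0$ contradiction. Without importing these two weight-$1$ edges from Lemma \ref{l42}, the arithmetic you propose cannot produce a contradiction, so as written the proof has a genuine gap.
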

\begin{proof}
	Suppose to be contrary that $|E(S')|> |E(C)|-2$. Since $E_{g'=\left( 0,0\right) }(S')$ is a matching and $S\subset C$ , we have $|E(S')|= |E(C)|-1$. By Lemma \ref{l4}, $\Omega=4$. Let $S''$ be a maximal $E_{g''=(0,0)}(G)$-alternating of $C$ containing $e$ with $g''(e)=(0,0)$ and $\omega(e)=4$. By Lemma \ref{l41}, $|E(S'')|\geq 9$, then $\omega(C)=32$. Since $S'$ contains an edge $e_1$ with $g'(e_1)=(0,0)$ and $\omega(e_1)=2$, $|E(S')|\leq 13$. Combining with Lemma \ref{l42}, $g'(e)\ne (0,0)$. Since $\omega(e)=4$, there are two edges weighted 1.
	Then
	\begin{equation}
		\begin{array}{lllllllllll}
			0=\omega(C)-\omega(C)&= |E_{g'=(0,0)}\cap E(C)|+|E(C)-E_{g'=(0,0)}|-\omega(C)\\
			&\le \frac{\omega(C)}{4}+\Omega+1+1+\Omega(7-1-2)-\omega(C)\\
			&=-2<0,
		\end{array}
		\nonumber
	\end{equation} a contradiction.

\end{proof}

\begin{lemma}\label{l45}
	$|E(S')|\geq 9$.
\end{lemma}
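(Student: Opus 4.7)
My plan splits into two main branches. First, when $\Omega=2$ the statement is immediate: by the definition of $S'$ given just before the lemma, $S'=S$, so $|E(S')|=|E(S)|\ge 9$ by Lemma~\ref{l41}. Second, for $\Omega\in\{3,4\}$ I argue by contradiction, assuming $|E(S')|\le 8$, and imitate the scheme of the proof of Lemma~\ref{l41} with the pair $(g,S)$ replaced by $(g',S')$ and with the edge $v_2v_3$ (of weight $2$) playing the role that the weight-$\Omega$ edge $v_5v_6$ played for $S$.

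Concretely, I re-parametrize $S'=u_0u_1\cdots u_{|E(S')|+1}$ so that $u_0\notin S'$, the edge $u_1u_2$ is the first $(0,0)$-edge of $S'$, and $v_2v_3=u_{2k-1}u_{2k}$ for some $k\ge 1$. Applying Lemma~\ref{l5} to the pair $(g',S')$ forces $|E(S')|\ge 5$ and $|E(S')|\ne 6$; since a maximal alternating segment has $(0,0)$-edges at both ends, $|E(S')|$ is odd, so the hypothesis $|E(S')|\le 8$ leaves only $|E(S')|\in\{5,7\}$.

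In either subcase I combine Lemma~\ref{l3} with the explicit weights supplied by Lemmas~\ref{l42} and~\ref{l43}, namely $\omega(v_1v_2)=\omega(v_3v_4)=1$, $\omega(v_2v_3)=2$ and $\omega(v_5v_6)=\Omega$, to extract identities of the form $\omega(E_{g'=c}(Q))=\omega(E_{g'=c+b}(Q))$ along sub-paths $Q$ of $C$ emanating from the endpoints of $S'$. For $|E(S')|=5$ these identities clash with the strict inequality $2=\omega(v_2v_3)>1=\omega(v_1v_2)=\omega(v_3v_4)$ once we invoke Lemma~\ref{l1}, which pins $\omega(E_{g'=(0,0)}(C))$ to $\omega(C)/4$, and the matching property of Lemma~\ref{l2}.

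The main obstacle will be the subcase $|E(S')|=7$, where the position $k$ of $v_2v_3$ inside $S'$ gives a short list of configurations. I plan to handle each by the $t''$-bump argument used at the end of the proof of Lemma~\ref{l41}: modify $g'$ by a suitable $f_{C_1,\cdot}$ so that two fresh odd-degree vertices in a component of the relevant $P_{b,a+b}$ are forced to lie outside the admissible range for a maximal alternating segment of length at most $7$, contradicting Lemma~\ref{l5} together with Remark~\ref{re1}. A careful bookkeeping of the small integer weights around $v_2v_3$ together with Lemma~\ref{l6} should dispose of any residual configuration.
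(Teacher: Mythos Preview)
Your proposal is correct and follows essentially the same approach as the paper: both dispose of $\Omega=2$ via $S'=S$, reduce to $|E(S')|\in\{5,7\}$, and then combine the balance identities of Lemma~\ref{l3} with the explicit weights $\omega(v_1v_2)=\omega(v_3v_4)=1$, $\omega(v_2v_3)=2$ from Lemmas~\ref{l42}--\ref{l43} and the $t',t''$ mechanism of Lemma~\ref{l5} with Remark~\ref{re1} to reach a contradiction. The only cosmetic difference is that for $|E(S')|=5$ the paper states the contradiction directly as $t'\ge 6$ and $t''\ge 6$ (violating Remark~\ref{re1}) rather than as a weight clash, and for $|E(S')|=7$ the paper's split into the sub-cases $t=5$ (weight contradiction via the figures) and $t=7$ ($t',t''\ge 8$) matches your configuration-by-configuration plan.
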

\begin{proof}
	Suppose to be contrary that $|E(S')|=5$ or $7$. 
	
	If $|E(S')|=5$, then $t=5$. As shown in Figure 
	Consider $P_{(1,1),(1,0)}$, $v_{1}'$ be a vertex with odd degree on $ P_{(1,1),(1,0)}$, let $v_{t'}'$ be another vertex with odd degree in the same component of $ P_{(1,1),(1,0)}$. By Lemma \ref{1.3}, $t'\geq 6$. Combining the definition of $t''$ and Figure 
	, $t''\geq 6$,  which is a contradiction as Remark \ref{re1}. 
	
	If $|E(S')|=7$, by symmetry, we can assume that exist edge $e\in \left\lbrace v_{1}'v_{2}', v_{3}'v_{4}'\right\rbrace$ such that $\omega(e)=2$,~especially,~assume $e=v_3v_4$~for $\Omega\ne2$. By Lemma \ref{l5}, $t=5$ or $7$. If $t=5$, $g'(v_{2}'v_{3}')=(1,1)$, combining with $\omega(e)=2$, $\Omega\geq 3$. By Figure \ref{fig1}
	and   Figure  \ref{fig6}.
	, $\omega(v_{2}' v_{3}')=1$, which is a contradiction as Lemma \ref{l3}. If $t=7$, by Lemma \ref{l5}, $g'(v_{2}'v_{3}')=g'(v_{4}'v_{5}')=(1,1)$ and $g'(v_{6}'v_{7}')=(0,1)$. Consider $P_{(1,1),(1,0)}$, $v_{1}'$ be a vertex with odd degree on $ P_{(1,1),(1,0)}$, let $v_{t'}'$ be another vertex with odd degree in the same component of $ P_{(1,1),(1,0)}$. Combining with Lemma \ref{l3}, $t'\geq 8$. Since the definition of $t''$ and Lemma \ref{l3}
	$t''\geq 8$,  which is a contradiction as Remark \ref{re1}. Thus, $|E(S')|\geq 9$.
	
\end{proof}

\section{Proof of Theorem \ref{1.3}}

We still use the same notation as section $3$ and we suppose that $S$ is a maximal $E_{g=(0,0)}$-alternating segment in $C$ such that there is an edge $e\in E(S)$ which is weighted $\Omega$ and $g(e)=(0,0)$.~We only need to proof such $S'$ does not exist.~If there is an $S'$,~then $9\le|E(S')|\le13$ by Lemma \ref{l1} and Lemma \ref{l45}.

\begin{stepp}\label{step1}
	$\left|E(S')\right|\ne 9$.
\end{stepp}
Suppose to be contrary that $|E(S')|=9$. By symmetry, we can assume that exist edge $e\in \left\lbrace v_{1}'v_{2}', v_{3}'v_{4}', v_{5}'v_{6}'\right\rbrace$ such that $\omega(e)=2$,~especially,~assume $e=v_3v_4$~for $\Omega\ne2$.

If $t=5$, then $g'(v_{2}'v_{3}')=(1,1)$ and $g'(v_{4}'v_{5}')=(0,1)$. If $\Omega\geq 3$, combining \ref*{fig1}
and   \ref*{fig6}, $\omega(v_{1}'v_{2}')\ne 2$ and $\omega(v_{3}'v_{4}')\ne 2$. Thus, $\omega(v_{5}'v_{6}')=2$. Futhermore, $\omega(v_{4}'v_{5}')=\omega(v_{6}'v_{7})=1$. Consider $P_{(1,1),(1,0)}$, $v_{1}'$ be a vertex with odd degree on $ P_{(1,1),(1,0)}$, let $v_{t'}'$ be another vertex with odd degree in the same component of $ P_{(1,1),(1,0)}$. Combining with Lemma \ref{l3}, $t'\geq 10$. Since the definition of $t''$ and Lemma \ref{l3},~
$t''\geq 10$,  which is a contradiction as Remark  \ref{re1}. If $\Omega= 2$, similarily, we have $t''\geq 10$, which is a contradiction.

If $t=7$, by Lemma \ref{l5}, $g(v_{2}'v_{3}')=(1,1)$, $g(v_{4}'v_{5}')=(1,1)$ and $g(v_{6}'v_{7}')=(0,1)$. If $\Omega\leq 3$, then $t'\ge10$,~thus~$t''\geq 10$, which is a contradiction as Remark \ref{re1}. If $\Omega=4$, then $t'\geq 9$. If $t'=9$. By Figure 
$\omega(v_{1}'v_{2}')\ne 2$ and $\omega(v_{3}'v_{4}')\ne 2$. Thus, $\omega(v_{5}'v_{6}')=2$. Since $t'=9$, $\omega(E_{g'=(0,0)}(v_{1}'v_{2}'\dots v_{9}'))=\omega(E_{g'=(0,1)}(v_{1}'v_{2}'\dots v_{9}'))$. Thus, $\omega(v_{6}'v_{7}')=4$ and $\omega(v_{8}'v_{9}')=4$. By $\omega(v_{8}'v_{9}')=4$ and Lemma \ref{l43}, $\omega(v_{6}'v_{7}')=1$, which is a contradiction.

If $t=8$, then $\omega(E_{g'=(0,0)}(v_{0}'v_{1}'\dots v_{8}'))\geq 5$. Thus $\left| E_{g'=(1,1)}(v_{0}'v_{1}'\dots v_{8}')\right| \geq 2$. Similar to Lemma \ref{l5}, we can prove that $g(v_{2}'v_{3}')=(1,1)$ and $g(v_{6}'v_{7}')=(1,1)$. Thus, $g(v_{4}'v_{5})=(0,1)$. Futhermore, we have $t'\geq 10$ and $t''\geq 10$,  which is a contradiction as Remark \ref{re1}. 

If $t=9$, then $g'(v_{8}'v_{9}')=(1,0)$ or $(0,1)$. Similarily, we can prove that $g(v_{2}'v_{3}')=(1,1)$ and $g(v_{6}'v_{7}')=(1,1)$.
Futhermore, we have $t'\geq 10$ and $t''\geq 10$,  which is a contradiction as Remark \ref{re1}. 

Thus, we have $\left|E(S')\right|\ne 9$.	

\begin{stepp}\label{step2}
	$\left|E(S')\right|\ne 11$.
\end{stepp}
Suppose to be contrary that $|E(S')|=11$. By symmetry, we can assume that exist edge $e\in \left\lbrace v_{1}'v_{2}', v_{3}'v_{4}', v_{5}'v_{6}'\right\rbrace$ such that $\omega(e)=2$,~especially,~assume $e=v_3v_4$~for $\Omega\ne2$.

If $t=5$, then $g'(v_{2}'v_{3}')=(1,1)$ and $g'(v_{4}'v_{5}')=(0,1)$. If $\Omega\geq 3$, combining Figure \ref*{fig1}
and   Figure  \ref*{fig6}, $\omega(v_{1}'v_{2}')\ne 2$ and $\omega(v_{3}'v_{4}')\ne 2$. Thus, $\omega(v_{5}'v_{6}')=2$. Futhermore, $\omega(v_{4}'v_{5}')=\omega(v_{6}'v_{7})=1$. Consider $P_{(1,1),(1,0)}$, $v_{1}'$ be a vertex with odd degree on $ P_{(1,1),(1,0)}$, let $v_{t}'$ be another vertex with odd degree in the same component of $ P_{(1,1),(1,0)}$. Combining with Lemma \ref{l3}, $t'\geq 11$. If $t'= 11$. Since $\omega(E_{g'=(0,0)}(v_{1}'v_{2}'\dots v_{11}'))\geq 6$ and Lemma \ref{l3}, $\omega(E_{g'=(0,1)}(v_{1}'v_{2}'\dots v_{11}'))\geq 6$.
By Lemma \ref{l3}, $g'(v_{10}'v_{11}')\ne (0,1)$. Since $g'(v_{4}'v_{5}')=(0,1)$ and $\omega(v_{4}'v_{5}')=1$, $g'(v_{6}'v_{7}')=g'(v_{8}'v_{9}')=(0,1)$. By $\omega(v_{6}'v_{7}')=1$, $\omega(v_{8}'v_{9}')=4$. Combining with Lemma \ref{l43}, $\omega(v_{10}'v_{11}')=1$. Thus, $\omega(E_{g'=(1,1)}(v_{1}'v_{2}'\dots v_{11}'))\ne \omega(E_{g'=(1,0)}(v_{1}'v_{2}'\dots v_{11}'))$, which is a contradiction. Finally, we have $t'\geq 12$. Similarily, we can obtain that $t''\geq 12$, which is a contradiction as Remark \ref{re1}. If $\Omega=2$. By Lemma \ref{l3}, $t'\geq 11$. If $t'= 11$. Since 
$\omega(E_{g'=(0,0)}(v_{1}'v_{2}'\dots v_{11}'))\geq 6$, $\Omega=2$ and $g'(v_{10}'v_{11}')\ne (0,1)$, $g'(e)=(0,1)$ and $\omega(e)=2$ for any $e\in \left\lbrace v_{4}'v_{5}', v_{6}'v_{7}', v_{8}'v_{9}'\right\rbrace$. By $\omega(E_{g'=(0,0)}(v_{0}'v_{1}'\dots v_{5}'))\geq 2$ and Lemma \ref{l3}, $\omega(v_{2}'v_{3}')=2$. Thus, $\omega(v_{10}'v_{11}')=2$. Let $S^*$ is a maximal $E_{g^*=(0,0)}(G)$-alternating containing $v_{4}'v_{5}'\dots v_{9}'$ and $g^*(v_{4}'v_{5}')=g^*(v_{6}'v_{7}')=g^*(v_{8}'v_{9}')=(0,0)$ on $C$, since $\omega(E_g^*(0,0))\leq 8$, $\left| E(S^*)\right|= 7$, which is a contradiction as Lemma \ref{l41}. Thus, $t'\geq 12$. Similarily, $t''\geq 12$, which is a contradiction as Remark \ref{re1}.

If $t=7$, by Lemma \ref{l5}, $g'(v_{2}'v_{3}')=(1,1)$, $g(v_{4}'v_{5}')=(1,1)$ and $g'(v_{6}'v_{7}')=(0,1)$. If $\Omega\leq 3$. Consider $t'$, Since $\omega(v_{2}'v_{3}')+\omega(v_{4}'v_{5}')\geq 4$, $\omega(v_{1}'v_{2}')+\omega(v_{3}'v_{4}')+\omega(v_{5}'v_{6}')\ge 4$ and Lemma \ref{l3}, then $t'\geq 12$, futher, we have  $t''\geq 12$, which is a contradiction as Remark \ref{re1}. If $\Omega=4$, then $t'\geq 11$. If $t'=11$. Since $\omega(E_{g'=(0,0)}(v_{1}'v_{2}'\dots v_{11}'))\geq 6$, $\Omega=4$ and  Lemma \ref{l3}, $\left| E_{g'=(0,1)}(v_{1}'v_{2}'\dots v_{11}')\right| \geq 2$. By Lemma 
 \ref{l3}, $g'(v_{10}'v_{11}')\ne (0,1)$, $g'(v_{8}'v_{9}')= (0,1)$ and $g'(v_{10}'v_{11}')= (1,0)$. Since $\omega(v_{2}'v_{3}')+\omega(v_{4}'v_{5}')\geq 4$ and Lemma \ref{l3}, $\omega(v_{10}'v_{11}')=4$. By Lemma \ref{l43}, $\omega(v_{8}'v_{9}')=\omega(v_{6}'v_{7}')=1$, which is a contradiction as $\omega(E_{g'=(0,0)}(v_{1}'v_{2}'\dots v_{11}'))\geq 6$. Thus, $t'\geq 12$. Similarily, $t''\geq 12$, which is a contradiction as Remark \ref{re1}. 

If $t=8$, then $\omega(E_{g'=(0,0)}(v_{0}'v_{1}'\dots v_{8}'))\geq 5$. Thus $\left| E_{g'=(1,1)}(v_{0}'v_{1}'\dots v_{8}')\right| \geq 2$. Similar to Lemma \ref{l5}, we can prove that $g(v_{2}'v_{3}')=(1,1)$ and $g(v_{6}'v_{7}')=(1,1)$. Thus, $g(v_{4}'v_{5})=(0,1)$. Futher, we have $t'\geq 10$ and $t''\geq 10$,  which is a contradiction as Remark \ref{re1}. The proof of $t\ne9$ is similar to the proof of $t=8$.

If $t=10$,~then $\omega(E_{g'=(0,0)}(v_0'\cdots v_{10}'))\ge6$,~thus $|E_{g'=(1,1)}(v_0'\cdots v_{10}')|\ge 2$.~If $t'=4$,~then $g'(v_2'v_3')=(0,1)$.~If $ g'(v_4'v_5')=g'(v_6'v_7')=g'(v_8'v_9')=(1,1)$,~similar to the proof of Lemma \ref{l5},~a contradiction.~Thus,~there are exactly two edges in $\left\lbrace v_4'v_5',~v_6'v_7',~v_8'v_9'\right\rbrace\cap E_{g'=(1,1)}(C) $,~neither of the weight of them are $1$ as $\Omega\le4$ and $\omega(E_{g'=(0,0)}(v_0'\cdots v_{10}'))=\omega(E_{g'=(1,1)}(v_0'\cdots v_{10}'))$,~thus $\Omega=2$,~which is a contradiction as Lemma $\ref{l3}$.~Thus $t'\ge 7$,~if $t'=7$,~then $\Omega\ne 2$ and $v_2v_3\in\left\lbrace v_1'v_2',v_3'v_4',v_5'v_6'\right\rbrace $ by Lemma \ref{l3},~Lemma $\ref{l42}$ and Lemma $\ref{l43}$.~By Lemma \ref{l3} and \ref{l42} unique edge of $\left\lbrace v_1'v_2',v_3'v_4',v_5'v_6'\right\rbrace \cap E_{g'=(1,1)}$ is weighted $1$,~a contadiction as Lemma \ref{l3}.~Therefore,~$t'\ge 12$.~Similarily,~$t''\ge12$,~a contradiction as Remark \ref{re1}.

The proof of $t\ne11$ is similar to $t\ne10$.

\begin{stepp}\label{step3}
	$\left|E(S')\right|\ne 13$.
\end{stepp}

\rm 	Suppose to be contrary that $|E(S')|=13$. By symmetry,~assume there is an edge in $ \left\lbrace v_1'v_2',v_3'v_4',v_5'v_6',v_7'v_8'\right\rbrace $ weighted $2$ which is $v_2v_3$ if $\Omega\ne2$. Since $\omega(E_{g'=(0,0)}(C))\leq 8$ and $\left| E_{g'=(0,0)}(S) \right|=7$, there is only one edge in $E(S)\cup E_{g'=(0,0)}(C)$ weighted $2$. If $\Omega=4$, combining with Figure \ref{fig1}, then $\omega(v_7v_8)\ne 2$.

\bf Case 3.1 \rm $t=5$.

Since $t=5$,~we have $g'(v_2'v_3')=(1,1)$ and $g'(v_4'v_5')=(0,1)$ by Lemma \ref{l3},~then $t'\ge7$ and $t'\ne8.$

If $t'=7$,~then $g'(v_6'v_7')=(1,0)$,~$\omega(v_6'v_7')=\omega(v_2'v_3')\ge 2$ and $ \omega(v_4'v_5')\ge 3$ by Lemma \ref{l3},~then $\Omega\ge3.$ By Lemma \ref{l42} and By Lemma \ref{l43} $\omega(v_6'v_7')=\omega(v_2'v_3')\ne 1 $,~then $\omega(v_4'v_5')\ne\Omega$.~Futhermore $\Omega=4$~and~$ \omega(v_4'v_5')=3$. Combining with Lemma \ref{l43},~we have $\omega(v_7'v_8')=2$ and $\omega(v_6'v_7')=1$,~a contradiction as $\omega(v_6'v_7')\ne 1$.

If $t'=9$,~combining with $\omega(E_{g'=(0,0)}(v_1'\cdots v_9'))=\omega(E_{g'=(0,1)}(v_1'\cdots v_9'))\geq 5$, then $\left| E_{g'=(0,1)}(v_1'\cdots v_9')\right| \geq 2$. By Lemma \ref{l3}, $g'(v_8'v_9')\ne (0,1)$. Thus, $g'(v_6'v_7')=g'(v_8'v_9')=(1,0)$. Since $\omega(E_{g'=(0,1)}(v_1'\cdots v_9'))\geq 5$ and $\left| E_{g'=(0,1)}(v_1'\cdots v_9')\right|=2$, $\Omega\geq 3$. By Lemma \ref{l42} and By Lemma \ref{l43}, $\omega(v_4'v_5')\ne\Omega$ and $\omega(v_6'v_7')\ne\Omega$. Futher, $\Omega=4$, $\omega(v_4'v_5')\ne1$ and $\omega(v_6'v_7')\ne1$. By Lemma \ref{l43},  and ,~a contradiction as $\omega(v_2'v_3')\ne 1$.

If $t'=10$,~then $\left\lbrace v_6'v_7',v_8'v_9'\right\rbrace=(1,0),(0,1)$. if one edge in $\left\lbrace v_4'v_5',v_6'v_7',v_8'v_9'\right\rbrace $ has a weight of $4$, then the remaining two edge weights are 1, which is a contradiction as Lemma \ref{l3}. Thus, $\omega(E_{g'=(0,1)}(v_1'\cdots v_10'))=6$ and $\Omega\geq 3$. By Lemma \ref{l42} and By Lemma \ref{l43},$\omega(v_1'v_2')=2$. Futher, $\omega(v_2'v_3')=1$, which is a contradiction as $\omega(v_2'v_3')=\omega(v_1'v_2')+\omega(v_3'v_4')\geq 3$.

If $t'=11$,~then there is no edge in $\left\lbrace v_2'v_3',v_4'v_5',v_6'v_7',v_8'v_9',v_{10}'v_{11}'\right\rbrace $ weighted $4$ and there is no edge in $\left\lbrace v_1'v_2',v_3'v_4'\right\rbrace $ weighted $2$. Since there is only one edge in $E(S)\cup E_{g'=(0,0)}(C)$ weighted $2$ and Lemma \ref{l42}, combining $\omega(v_2'v_3')\ne 1$,~$\Omega\ne 4$. Since $\omega(E_{g'=(0,0)}(v_1'\cdots v_{11}'))\geq 6$ and Lemma $\ref{l43}$, there is no edge in $\left\lbrace v_4'v_5',v_6'v_7',v_8'v_9',v_{10}'v_{11}'\right\rbrace $ weighted $3$. Thus, $\Omega\ne 3$. Futher, $\omega(v_2'v_3')=\omega(v_4'v_5')=\omega(v_6'v_7')'=\omega(v_8'v_9')=\omega(v_{10}'v_{11}')=2$ and $g'(v_4'v_5')=g'(v_6'v_7')=g'(v_8'v_9')=(0,1)$,~a contradiction as Lemma \ref{l41}.

If $t'=12$,~then there is no edge in $\left\lbrace v_2'v_3',v_4'v_5',v_6'v_7',v_8'v_9',v_{10}'v_{11}'\right\rbrace $ weighted $4$ and there is no edge in $\left\lbrace v_1'v_2',v_3'v_4'\right\rbrace $ weighted $2$ by there is only one edge in $E(S)\cup E_{g'=(0,0)}(C)$ weighted $2$. Combining $\omega(v_2'v_3')\ne 1$,~$\Omega\ne 4$. Since $\omega(E_{g'=(0,0)}(v_1'\cdots v_{12}'))\geq 7$ and Lemma $\ref{l43}$, there is no edge in $\left\lbrace v_4'v_5',v_6'v_7',v_8'v_9',v_{10}'v_{11}'\right\rbrace $ weighted $3$. Then $\Omega\ne 3$,~a contradiction as Lemma $\ref{l3}$.

If $t'=13$,~then there is no edge in $\left\lbrace v_2'v_3',v_4'v_5',v_6'v_7',v_8'v_9',v_{10}'v_{11}'\right\rbrace $ weighted $4$ and there is no edge in $\left\lbrace v_1'v_2',v_3'v_4'\right\rbrace $ weighted $2$ by there is only one edge in $E(S)\cup E_{g'=(0,0)}(C)$ weighted $2$. Combining $\omega(v_2'v_3')\ne 1$,~$\Omega\ne 4$. Since $\omega(E_{g'=(0,0)}(v_1'\cdots v_{12}'))\geq 7$ and Lemma $\ref{l43}$, there is no edge in $\left\lbrace v_6'v_7',v_8'v_9'\right\rbrace $ weighted $3$. If $\Omega=3$, then one of the weights of $v_{4}'v_{5}'$ and $v_{10}'v_{11}'$ is $3$. If $\omega(v_{4}'v_{5}')=3$, then $\omega(v_6'v_7')=\omega(v_8'v_9')=1$ and $\omega(v_7'v_8')=2$ by Lemma \ref{l413}. Since $\omega(E_{g'=(0,0)}(v_1'\cdots v_{12}'))\geq 7$, $g'(v_6'v_7')=g'(v_8'v_9')=g'(v_{10}'v_{11}')=(0,1)$ and $\omega(v_{10}'v_{11}')\geq 2$. If $\omega(v_{10}'v_{11}')=2$, then $\omega(v_2'v_3')=2$. Consider the maximal $E_{g''=(0,0)}(G)$-alternating $S_0$ containing $v_{4}'v_{5}'$, $v_6'v_7'$, $v_8'v_9'$ and $v_{10}'v_{11}'$. Thus, $\left| E(S_0)\right| \leq 7$, which is a contradiction as Lemma \ref{l41}. Thus, $\omega(v_{10}'v_{11}')=3$. Similarily, if $\omega(v_{10}'v_{11}')=3$, then we can also have $\omega(v_{4}'v_{5}')=3$ and $\omega(v_6'v_7')=\omega(v_8'v_9')=1$. Consider $P_{(1,0),(0,1)}$, $v_{10}$ be a vertex with odd degree in $P_{(1,0),(0,1)}$ and another vertex with odd degree in the same component is not in $S$ by Lemma \ref{l3},~a contradiction as Lemma $\ref{l6}$. Thus, $\Omega=2$. Since $\omega(E_{g'=(0,0)}(v_1'\cdots v_{12}'))\geq 7$ and $\Omega=2$, $\left| E_{g'=(0,0)}(v_1'\cdots v_{12}')\right| =4$. Consider the maximal $E_{g'''=(0,0)}(G)$-alternating $S_1$ containing $E_{g'=(0,0)}(v_1'\cdots v_{12}')$, thus $\left| E(S_1)\right| \leq 7$, which is a contradiction as Lemma \ref{l41}.
Therefore, $t'\geq 14$. Let us consider $t''$.

If $t''=7$,~then $g'(v_6'v_7')=(1,0)$. Since $\omega(E_{g'=(0,0)}(v_0'\cdots v_{5}'))=\omega(E_{g'=(1,1)}(v_0'\cdots v_{5}'))$ and $\omega(E_{g'=(0,1)}(v_0'\cdots v_{7}'))=\omega(E_{g'=(1,1)}(v_0'\cdots v_{7}'))$, $\omega(v_0'v_1')=\omega(v_4'v_5')=\omega(v_0'v_1')+\omega(v_2'v_3')$,~a contradiction.

If $t''=9$,~then $g'(v_6'v_7')=(1,0)$ and $g'(v_8'v_9')\ne (1,0)$ by Lemma \ref{l3}. Since $\omega(E_{g'=(0,0)}(v_0'\cdots v_{9}'))=\omega(E_{g'=(1,1)}(v_0'\cdots v_{9}'))\geq 5$, $g'(v_6'v_7')=g'(v_8'v_9')=(1,0)$, which is a contradiction.

If $t''=10$,~then $g'(v_6'v_7')=g'(v_8'v_9')=(1,0)$~by Lemma \ref{l3}. Since $\omega(E_{g'=(1,0)}(v_0'\cdots v_{5}'))=\omega(E_{g'=(0,1)}(v_0'\cdots v_{5}'))$ and $\omega(E_{g'=(0,1)}(v_0'\cdots v_{10}'))=\omega(E_{g'=(1,1)}(v_0'\cdots v_{10}'))$, $\omega(v_0'v_1')=\omega(v_4'v_5')=\omega(v_0'v_1')+\omega(v_2'v_3')$,~a contradiction.

If $t''=11$,~then $g'(v_6'v_7')=g'(v_8'v_9')=(1,0)$,~$g'(v_{10}'v_{11}')=(0,1)$~by Lemma \ref{l3}. We can easily check that  there is no edge in $\left\lbrace v_2'v_3',v_6'v_7',v_8'v_9',v_{10}'v_{11}'\right\rbrace $ weighted $ 1$.~Therefore,~$\Omega=2$ by Lemma \ref*{l42} and Lemma \ref*{l43}. Thus, $\omega(v_6'v_7')+\omega(v_8'v_{9}')\leq 4$, which is a contradiction as $\omega(E_{g'=(0,0)}(v_0'\cdots v_{11}'))=\omega(v_6'v_7')+\omega(v_8'v_{9}')\geq 6$.

If $t''=12$,~then $ |\left\lbrace v_6'v_7',v_8'v_9',v_{10}'v_{11}'\right\rbrace \cap E_{g'=(1,0)}|=2$ and  $|\left\lbrace v_6'v_7',v_8'v_9',v_{10}'v_{11}'\right\rbrace \cap E_{g'=(0,1)}|=1$.~We can easily check that  there is no edge in $\left\lbrace v_2'v_3',v_6'v_7',v_8'v_9',v_{10}'v_{11}'\right\rbrace $ weighted $ 1$.~Therefore,~$\Omega=2$ by Lemma \ref*{l42} and Lemma \ref*{l43},~Thus, $\omega(E_{g'=(1,0)}\\(v_0'\cdots v_{12}'))\le 4$,~which is a contradiction as $\omega(E_{g'=(1,0)}(v_0'\cdots v_{12}')=E_{g'=(0,0)}\\(v_0'\cdots v_{12}')\ge7.$

If $t''=13$,~then there is no edge in $\left\lbrace v_2'v_3',v_4'v_5',v_6'v_7',v_8'v_9',v_{10}'v_{11}',v_{12}'v_{13}'\right\rbrace $ weighted $4$ by Lemma \ref{l42},~futhermore $\Omega\ne 4$. Thus,~ $\Omega=3$.~Since $\omega(E_{g'=(1,0)})\\(v_0'\cdots v_{13}')=\omega(E_{g'=(0,0)})(v_0'\cdots v_{13}')=7$.~Consider the maximal $E_{g''=(0,0)}$-alternating segment~$S^*$ containing $v_6'v_7'$,~$v_8'v_9'$ and $v_{10}'v_{11}'$ in $E_{g''=(0,0)}(S^*)$.~Thus $|E(S^*)|\le7$,~a contradiction as Lemma \ref{l41}.

Therefore $t''\ge14$,~a contradiction as \ref{l5}.

\bf Case 3.2 \rm $t=7$.

By Lemma $\ref{l5}$,~$g'(v_2'v_3')=g'(v_4'v_5')=(1,1)$~and $g'(v_6'v_7')=(0,1)$. Then $t'\ge 11$ by Lemma \ref{l3}.

If $t'=11$,~then $g'(v_6'v_7')=g'(v_8'v_9')=(0,1)$,~$g'(v_{10}'v_{11}')=(1,0)$~and $\omega(v_6'v_7')+\omega(v_8'v_9')=6$.~Futhermore,~by Lemma \ref{l41},~we have $\Omega=4$ and $\omega(v_6'v_7')=\omega(v_8'v_9')=3$ ,~then $\omega(v_1'v_2')=2$ and $\omega(v_0'v_1')=1$,~which is a contradiction as $\omega(v_0'v_1')=\omega(v_6'v_7')$ by Lemma \ref{l3}.

If $t'=12$,~the one of  $g'(v_6'v_7'),g'(v_8'v_9')$,~$g'(v_{10}'v_{11}')$ is ~$(1,0)$ and the other two are $(0,1)$.~Since these total weight is $7$,~one of which is weighted $4$.~By Lemma \ref{l42},~the other one is weighted $1$,~a contradiction. 

If $t'=13$,~then $g'(v_8'v_9')\ne(0,1)$,~$g'(v_{12}'v_{13}')\ne(0,1) $~$g'(v_{10}'v_{11}')=(0,1)$ and $\omega(v_6'v_7')+\omega(v_{10}'v_{11}')=7$ by Lemma \ref{l3} and Lemma \ref{l41}.~Since these total weight is $7$,~one of which is weighted $4$.~By Lemma \ref{l42},~the other one is weighted $1$,~a contradiction. 

Finally, we have $t'\ge14$,~then $\omega(v_7v_8)\ne 2$ if $\Omega=4$,~$t''\ge 13$ by Lemma \ref{l3}.~

If $t''=13$,~then $g'(v_8'v_9')=g'(v_{12}'v_{13}')=(1,0)$,~$g'(v_{10}'v_{11}')=(0,1) $~ and $\omega(v_8'v_9')+\omega(v_{12}'v_{13}')=7$ by Lemma \ref{l3} and Lemma \ref{l41}.~Since these total weight is $7$,~one of which is weighted $4$.~By Lemma \ref{l42},~the other one is weighted $1$,~a contradiction. 

Therefore $t''\ge14$,~a contradiction as Lemma \ref{l5}.

\bf Case 3.3 \rm $t=8$.

By the similar proof of Lemma \ref{l5},~we have $g'(v_2'v_3')=g'(v_6'v_7')=(1,1)$ and $g'(v_4'v_5')=(0,1)$.~Then $t'\ge 13 $ by Lemma \ref{l3}.

If $t'=13$,~then $g'(v_8'v_9')=g'(v_{12}'v_{13}')=(1,0)$ and $g'(v_4'v_5')=g'(v_{10}'v_{11}')=(0,1)$ by Lemma \ref{l3} and Lemma \ref{l41}.~Since $\omega(v_4'v_5')\ne 4$ and $\omega(v_{10}'v_{11}')\ne 4$ as there is only one edge in $E(S)\cup E_{g'=(0,0)}(C)$ weighted $2$,~this is a contradiction as Lemma \ref{l3}.

Therefore, $t'\ge14$,~then $t''\ge 13$ by Lemma \ref{l3}.

If $t''=13$,~then $g'(v_4'v_5')=g'(v_{12}'v_{13}')=(0,1)$,~$g'(v_{10}'v_{11}')=g'(v_8'v_9')=(1,0) $.~Since $\omega(v_4'v_5')+\omega(v_{12}'v_{13}')=7$ by Lemma \ref{l3}, $\omega(v_4'v_5')\ne 4$ and $\omega(v_{12}'v_{13}')\ne 4$ as there is only one edge in $E(S)\cup E_{g'=(0,0)}(C)$ weighted $2$,~this is a contradiction as Lemma \ref{l3}.

Therefore $t''\ge14$,~a contradiction as Lemma \ref{l5}.

\bf Case 3.4 \rm $t=9$.

By the similar proof of Lemma \ref{l5},~we have $g'(v_2'v_3')=g'(v_6'v_7')=(1,1)$,~$g'(v_4'v_5')\ne(1,1)$ and $g'(v_8'v_9')\ne(1,1)$.~Then $t'\ge 13 $ by Lemma \ref{l3}.

If $t'=13$,~then $g'(v_{12}'v_{13}')=(1,0)$ and there are exactly two edges in of $\left\lbrace v_4'v_5',v_8'v_9',v_{10}'v_{11}'\right\rbrace  $ in $E_{g'=(0,1)}(C)$ and the total weight is $7$.~Futhermore,~there is an edge in $\left\lbrace v_4'v_5',v_8'v_9',v_{10}'v_{11}'\right\rbrace  $ weighted $4$,~which is a contradiction as there is only one edge in $E(S)\cup E_{g'=(0,0)}(C)$ weighted $2$.

Therefore, $t'\ge14$,~then $t''\ge 13$ by Lemma \ref{l3}.

If $t''=13$,~then $g'(v_{12}'v_{13}')=(0,1)$,~since the only edge which can be weighted $4$ in $S$ is $v_2'v_3'$ which is in $E_{g''=(1,1)}(C)$ by the similar proof of Lemma \ref{l5}.~But the total weight of $E_{g'=(0,0)}(C)$ in $v_0\cdots v_{13}$ is $7$,~a contradiction as Lemma \ref{l3}.

Therefore $t''\ge14$,~a contradiction as Lemma \ref{l5}.

\bf Case 3.5 \rm $t=10$.

If $t'=4$,~then $g'(v_{2}'v_{3}')=(0,1)$ by Lemma \ref{l3},~then there are exactly two edges of $\left\lbrace v_4'v_5',v_6'v_7',v_8'v_9'\right\rbrace \cap E_{g'=(1,1)}$ and total weight is $6$ by the proof similar to Lemma \ref{l5}.~Thus their weight is not  $1$,~then $\Omega=2$.~By Lemma \ref{l3},~$ g'(v_4'v_5')=g'(v_6'v_7')=g'(v_8'v_9')=(1,1)$.~Similar to the proof Lemma \ref{l5},~this is a contradiction.

If $t'=7$,~then there is only one edge in $v_0\cdots \cap E_{g'=(0,1)}$ and it is weighted $4$,~then this edge is $v_2'v_3'$.~By Lemma \ref{l3},~$g'(v_8'v_9')=(1,1)$ is weighted $2$. We can easily find a contradiction by Lemma \ref{l42} and Lemma \ref{l3}.

If  $t'=13$,~then $\Omega=4$ and $g'(v_2'v_3')=(0,1)$ is weighted $4$ since the only edge which can be weighted $4$ in $S$ is $v_2'v_3'$ and the total weght of $E_{g'=(0,0)}(v_1\cdots v_{13}) $ is $7$. We can obtain a contradiction as Lemma \ref{l3} and $g'(v_8'v_9')\ne 4$.

Therefore, $t'\ge14$,~then $t''\ge 13$ by Lemma \ref{l3}.

If $t''=13$,~then the total weight of $E_{g'=(0,0)}(C)$ and $E_{g'=(1,1)}(C)$ are at least $7$,~a contradiction as the only edge which can be weighted in $S$ is $v_2'v_3'$. 

Therefore $t''\ge14$,~a contradiction as Lemma \ref{l5}.

Similarly,~we obtain that $t\ne 11$.

\bf Case 3.6 \rm $t=12$.

By there is only one edge in $E(S)\cup E_{g'=(0,0)}(C)$ weighted $2$ and Lemma \ref{l42},~ $\omega(e)\ne4$ for each edge in $S$.

If $t'=4$ and $g'(v_4'v_5')=g'(v_6'v_7')=g'(v_8'v_9')=g'(v_{10}'v_{11}')=(1,1)$,~then we can proof a contradiction which is similar to Lemma \ref{l5}.~Thus,~by $\omega(E_{g'=(1,1)}(S))=\omega(E_{g=(0,0)}(S))=7$ and $\omega(e)=\ne4$ for each edge in $S$,~we have $\Omega\ge3$.~If $\Omega=4$,~then $ \omega(v_5'v_6')\ne 2$ by Lemma \ref{l42}. Then there is only one edge weighted $2$ in $\left\lbrace v_1'v_2',v_3'v_4'\right\rbrace$  a contradiction as $\omega(E_{g=(0,1)}(v_1v_2v_3))=\omega(E_{g=(0,0)}(v_1v_2v_3))$.~Therefore,~ $\Omega=3$,~then there is an edge in $\left\lbrace v_4'v_5',v_6'v_7',v_8'v_9',v_{10}'v_{11}'\right\rbrace $ weighted $3$.~Then $v_1'$ and $v_3'$ are vertices of odd degree in $P_{(1,0),(0,1)}$ assume ${v_{t_1}}$ be a vertex with odd degree in the same component of $P_{(1,0),(0,1)}$ of $v_1'$ and ${v_{t_3}}$ be a vertex with odd degree in the same component of $P_{(1,0),(0,1)}$ of $v_3'$.~by Lemma \ref{l3},~$t_1\ge 13$ or $t_3\ge 13$,~a contradiction as Lemma \ref{l6}.

If $t'=6$,~then $ g'(v_6'v_7')=g'(v_8'v_9')=g'(v_{10}'v_{11}')=(1,1)$ and $\omega(v_6'v_7')+\omega(v_8'v_9')+\omega(v_{10}'v_{11}')=7$,~by finding a maximal $E_{g''=(0,0)}$-alternating segment of $C$ such $g''(v_6'v_7')=g''(v_8'v_9')=g''(v_{10}'v_{11}')=(0,0)$,~then the length of this segment is at most $7$,~a contradiction as Lemma \ref{l41}.

Thus $t'\ge 9$.~If $t'=9$,~then there are exactly two edges of in $E_{g'=(1,1)}(v_0'\cdots v_{12}')$.
By Lemma \ref{l3},~$g'(v_2'v_3')=(1,1)$ is weghted $4$,~a contradiction as the total weight of $E_{g'=(1,1)}(C)$ is at least $9$.

Similar to $t'\ne 9$,~we have $t'\ne 10$,~thus $t'\ge 14$ by Lemma \ref{l3} and Lemma $\ref{l42}$.

Then $t''\ge 14$,~a contradiction as Lemma \ref{l5}.

Similarly,~we obtain that $t\ne 13$.
 Therefore,~$|E(S')|\ne 13$ by Lemma \ref{l41}.
 
By $|E(S')|$ is odd and $9\le|E(S')|\le13$,~a contradiction as $|E(S')|\notin \left\lbrace 9,11,13\right\rbrace $.
$\hfill\qed$



\newpage


\begin{thebibliography}{00}
\bibitem{Bondy}  Bondy, J. A.,~Murty, U. S. R.,~Graph theory. \ Springer, \rm 2008.
\bibitem{Fan}  G.-H. Fan,  Integer 4-flows and cycle covers. \it Combinatorica \rm37(\bf6\rm), \rm(2017), 1097--1112.
\bibitem{K}  T. Kaiser,~D. Kr\'al,~B. Lidick\'y, P. Nejedl\'y, R. \v{S}\'amal,  Short cycle covers of graphs with minimum degree three. \it SIAM J. Discrete Math. \rm24(\bf1\rm),~(2010), 330--355.
\bibitem{T}  W. T. Tutte,~A contribution on the theory of chromatic polynomial,~\it Canad. J. Math.,~\rm6, \rm(1954),~80--90.
\bibitem{WLG}  X. Wang, Y.~Lu, S.-G. Zhang,  Note on integer 4-flows in graphs. \it Acta Math. Sin. (Engl. Ser.) \rm38(\bf9\rm) (2022), 1653--1664. 
\bibitem{Zhang_1} C.-Q. Zhang, Integer flows and cycle covers of graphs.  \it Marcel Dekker, Inc., \rm New York, 1997.














\end{thebibliography}
\end{document}